\def\widebreve{\mathpalette\wide@breve}
\def\wide@breve#1#2{\sbox\z@{$#1#2$}%
     \mathop{\vbox{\m@th\ialign{##\crcr
\kern0.08em\brevefill#1{0.8\wd\z@}\crcr\noalign{\nointerlineskip}%
                    $\hss#1#2\hss$\crcr}}}\limits}
\def\brevefill#1#2{$\m@th\sbox\tw@{$#1($}%
  \hss\resizebox{#2}{\wd\tw@}{\rotatebox[origin=c]{90}{\upshape(}}\hss$}
\DeclareMathOperator{\spec}{Spec}
\newcommand{\Lem}[1]{Lemma~\ref{#1}\xspace}
\newcommand{\Prop}[1]{Proposition~\ref{#1}\xspace}
\newcommand{\Thm}[1]{Theorem~\ref{#1}\xspace}
\newcommand{\Rmk}[1]{Remark~\ref{#1}\xspace}
\DeclareMathOperator{\Aut}{Aut}
\DeclareMathOperator{\Spec}{Spec}
\begin{document}

\title{Comer Schemes, Relation Algebras, and the Flexible Atom Conjecture
\footnote{We wish to thank the anonymous reviewers for their helpful feedback. A preliminary version of this work appeared in RAMiCS 2023 \cite{AALRamics23}.} }

\author{
Jeremy F.~Alm \\
  School of Mathematical and Statistical Sciences, Southern Illinois University
   \and 
   David Andrews \\
  Department of Mathematics, University of  Dallas
  \and 
  Michael Levet\thanks{ML was partially supported by J.~A.~Grochow's NSF award CISE-2047756 and a Summer Research Fellowship through the Department of Computer Science at the University of Colorado Boulder.} \\
  Department of Computer Science, College of Charleston}
  
\runninghead{J. F. Alm, D. Andrews, M. Levet}{Comer Schemes, Relation Algebras, and the Flexible Atom Conjecture}

\maketitle

\definecolor {processblue}{cmyk}{0.96,0,0,0}
\definecolor{processred}{rgb}{200, 0, 0}

\vspace{-8ex}

\begin{abstract}
In this paper, we consider relational structures arising from Comer's finite field construction, where the cosets need not be sum free. These Comer schemes generalize the notion of a Ramsey scheme and may be of independent interest. As an application, we give the first finite representation of $34_{65}$. This leaves $33_{65}$ as the only remaining relation algebra in the family $N_{65}$ with a flexible atom that is not known to be finitely representable. Motivated by this, we complement our upper bounds with some lower bounds. Using a SAT solver, we show that $33_{65}$ is not finitely representable on fewer than $24$ points, and that $33_{65}$ does not admit a cyclic group representation on fewer than $120$ points. We also employ a SAT solver to show that $34_{65}$ is not representable on fewer than $24$ points.

\end{abstract}

\textbf{Keywords.} Flexible Atom Conjecture, Comer schemes, Finite Fields, Representations.




\section{Introduction}
Given a class of finite algebraic structures, it is natural to ask which members can be \textit{instantiated} or \textit{represented} over a finite set $S$, where there exist natural operations on $S$ corresponding to the operations of the algebraic structure. In the setting of finite groups, the representation question is answered by Cayley's theorem: every finite group can be instantiated as a finite permutation group. For this paper, we consider the class of finite relation algebras, which are Boolean lattices that satisfy certain equational axioms that capture the notion of relational composition (see Section~\ref{sec:Preliminaries} for a more precise formulation). There exist finite relation algebras that do not admit representations even over infinite sets-- see for instance \cite{Nonrepresentable2020} and the citations therein. It is essentially folklore that there are relation algebras that admit representations over infinite sets, but are not finitely representable. The so called \textit{point algebra} is one such example (for completeness, we include a proof in Appendix~\ref{appendix:PointAlgebra}). On the other hand, Comer \cite[Theorem~5.3]{ComerCombinatorial} showed that every finite integral relation algebra with a flexible atom (i.e., an atom that is as broadly compatible with non-zero products as possible-- see Section~\ref{sec:Preliminaries} for a precise definition) is representable over a countably infinite set.

It is natural to ask whether Comer's result can be strengthened to hold in the setting of finite sets. This is precisely the \textit{Flexible Atom Conjecture}, which states that every finite integral relation algebra with a flexible atom is representable over a finite set. Jipsen, Maddux \& Tuza showed that the finite symmetric integral relation algebras in which every diversity atom is flexible (denoted $\mathfrak{E}_{n+1}(1,2,3)$), are finitely representable. In particular, the algebra with $n$ flexible atoms is representable over a set of size $(2+o(1))n^2$ \cite{Jipsen}. We note that if all cycles are present, then all diversity atoms are flexible. Hence, the case considered in \cite{Jipsen} is intuitively the \textit{big end} of the Flexible Atom Conjecture.

The other extreme is when just enough cycles are present for one atom to be flexible. This case was handled by Alm, Maddux \& Manske \cite{AMM}, who exhibited a representation of the algebra $A_{n}$ obtained from splitting the non-flexible diversity atom of $6_{7}$ into $n$ symmetric atoms.\footnote{Here, we use Maddux's \cite{Madd} numbering for relation algebras such as $6_{7}$ and $32_{65}$. Note that $6_{7}$ is the relation algebra with two symmetric atoms $a, b$, where $a$ is flexible and the sole forbidden diversity cycle is $bbb$. We refer to Table~\ref{tab:cycles} for $32_{65}$.} In particular, this construction yielded a representation of $A_{2} = 32_{65}$ over a set of size $416,714,805,91$. Dodd \& Hirsch \cite{DH} subsequently improved the upper bound of the minimum representation size of $32_{65}$ to $63,432,274,896$. This was subsequently improved to 8192 by J.F. Alm \& D. Sexton (unpublished), and later $3432$ by \cite{AlmAndrews}. Finally, in \cite{alm2021improved}, the authors exhibited a representation over a set of size $1024$ for $32_{65}$, as well as the first polynomial upper bound on $\min(\spec(A_{n}))$ (where $\spec(A_{n})$ is the spectrum taken over all square representations).

In the quest for finite representations, it is desirable to constrain the search space. Furthermore, the minimum number of points required to represent a relation algebra serves as a measure of combinatorial complexity. This simultaneously motivates the study of small representations, the study of highly symmetric representations such as over groups, as well as lower bounds. There are few lower bounds in the literature. Jipsen, Maddux, \& Tuza exhibited a lower bound of $n^{2} + n + 1$ for the relation algebra $\mathfrak{E}_{n+1}(1,2,3)$ \cite{Jipsen}. In \cite{alm2021improved}, the authors showed that any representation of $A_{n}$ requires at least $2n^{2} + 4n + 1$ points, which is asymptotically double the trivial lower bound of $n^{2} + 2n + 3$. The key technique involved analyzing the combinatorial substructure induced  by the flexible atom. In the special case of $A_{2} = 32_{65}$, the authors used a SAT solver to further improve the lower bound. Namely, they showed that $32_{65}$ is not representable on a set of fewer than $26$ points.

Using similar techniques, Alm \& Levet \cite{AlmLevet2} showed that $31_{37}$ is not representable using fewer than $16$ points and $35_{37}$ is not representable using fewer than $14$ points. In the process, Alm \& Levet produced two generalizations of $31_{37}, 33_{37}$, and $35_{37}$, obtaining lower bounds against these generalizations.

The notion of a Ramsey scheme was first introduced (but not so named) by Comer \cite{ComerMonochrome}, where he used them to obtain finite representations of relation algebras. Kowalski \cite{KowalskiRamsey} later introduced the term \textit{Ramsey relation algebra} to refer to the (abstract) relation algebras obtained from embeddings into Ramsey schemes. 

In \cite[Problem~2.7]{MadduxVarieties}, Maddux raised the question as to whether Ramsey schemes exist for all number of colors. Comer communicated this problem to Trotter; and in the mid-80's, Erd\H{o}s, Szemer\'edi, \& Trotter gave a purported proof on the existence of Ramsey schemes for sufficiently many colors. Trotter communicated this proof to Comer via email, who in turn communicated it to Maddux \cite{MadduxPersonalCommunication}. Unfortunately, the construction provided by Erd\H{o}s, Szemer\'edi, \& Trotter was not correct, as it did not satisfy the mandatory cycle condition. It remains open as to whether there even exist Ramsey schemes for infinitely many values of $n$. Similarly, it remains open as to whether all of the \textit{small} (those with four or fewer atoms) integral symmetric relation algebras with at least one flexible atom outlined in \cite{Madd} admit finite representations. As of 2009 \cite{MadduxSlides}, $33_{65}, 34_{65}$, and $59_{65}$ were the only symmetric $4$-atom integral relation algebras with a flexible atom not known to admit finite representations. As of 2013 \cite{MadduxPersonalCommunication2}, $33_{37}$ and $35_{37}$ were the only non-symmetric $4$-atom integral relation algebras with a flexible atom not known to admit finite representations. In 2017, Alm \& Maddux \cite{AlmMaddux5965} gave a finite representation for $59_{65}$. In \cite{AlmLevet2}, Alm \& Levet gave finite representations for $33_{37}$ and $35_{37}$. In this paper, we give a finite representation for $34_{65}$, which leaves $33_{65}$ as the only remaining $4$-atom integral relation algebra with a flexible atom from \cite{Madd} that is not known to admit a finite representation.

There has been considerable progress in constructing Ramsey schemes for a finite number of colors using a technique due to Comer \cite{ComerMonochrome}. Intuitively, Comer's method takes a positive integer $m$ and considers finite fields $\mathbb{F}_{q}$, where $q \equiv 1 \pmod{2m}$. We next consider the unique multiplicative subgroup $H \leq \mathbb{F}_{q}^{\times}$ of order $(q-1)/m$, and check whether the cosets of $\mathbb{F}_{q}^{\times}/H$ yield a group representation (namely, taking the cosets to be the  atoms of the relation algebra-- see Definition~\ref{def:GroupRepresentation}). With the sole exception of an alternate construction of the $3$-color algebra using $(\mathbb{Z}/4\mathbb{Z})^{2}$ \cite{WHITEHEAD1975399}, all known constructions have been due to the guess-and-check finite field method of Comer \cite{ComerMonochrome}. The $m$-color Ramsey number 
\[
R_{m}(3) = R(\underbrace{3, \ldots, 3}_{m})
\]
provides an upper bound on $q$, restricting the search space for the finite fields to be considered.

Using this method, Comer \cite{ComerMonochrome} produced Ramsey schemes for $m = 2, 3, 4, 5$ colors. Using a computer, Maddux \cite{MadduxAMS2011} extended this work for $m = 6, 7$. Independently, Kowalski \cite{KowalskiRamsey} constructed Ramsey schemes over prime fields for $2 \leq m \leq 120$ colors  (with the exception of $m = 8, 13$), and Alm \& Manske \cite{AlmManskeRamsey} constructed Ramsey schemes over prime fields for $2 \leq m \leq 400$ colors, (again with the exception of $m = 8, 13$). Kowalski \cite{KowalskiRamsey} also considered non-prime fields. Alm \cite{Alm2017401AB} subsequently produced Ramsey schemes for $2 \leq m \leq 2000$ colors (excluding $m = 8, 13$) using the fast algorithm of \cite{AlmYlv}. He also substantially improved the upper bound on $p$ with respect to $m$, to $p < m^{4} + 5$, finally showing that no construction over prime fields exists for $m = 13$ \cite{Alm2017401AB}. Alm \& Levet \cite{AlmLevet2} further improved this bound to $p < m^{4} - (2-o(1))m^{3} + 5.$

Alm \& Levet \cite{AlmLevet2} generalized the notion of a Ramsey scheme to the directed (antisymmetric) setting. As a consequence, they gave finite representations for the relation algebras $33_{37}, 35_{37}$, $77_{83}$, $78_{83}$, $80_{83}$, $82_{83}$, $83_{83}$, $1310_{1316}$, $1313_{1316}$, $1315_{1316}$, and $1316_{1316}$. Only $83_{83}$ and  $1316_{1316}$ were previously known to be finitely representable, by a slight generalization of \cite{Jipsen}.

Given the success of Comer's method \cite{ComerMonochrome}, we ask the following inverse question: what relation algebras admit finite constructions via Comer's method? We investigate this question here.

\noindent \\ \textbf{Main Results.} In this paper, we extend the notion of a Ramsey scheme \cite{ComerMonochrome} by relaxing the notion that the cosets need to be sum-free. We refer to such relational structures as \textit{Comer schemes}. A Comer scheme naturally generates an abstract integral symmetric relation algebra. A priori, the cycle structure of these algebras is not clear. We investigate this question here. 

As a first application, we provide the first finite representation for the relation algebra $34_{65}$. This relation algebra has four symmetric atoms $1', a, b,$ and $c$, with forbidden cycles $bbc$ and $ccb$. The atom $a$ is flexible. Hence, by \cite[Theorem~5.3]{ComerCombinatorial}, $34_{65}$ admits a representation over a countable set.

\begin{theorem} \label{thm:3465}
The relation algebra $34_{65}$ admits a representation on $p = 3697$ points.
\end{theorem}

We obtain our finite representation of $34_{65}$ by embedding it into the integral symmetric relation algebra with $24$ diversity atoms $a_{0}, \ldots, a_{23}$. The forbidden cycles are of the form $\{ a_{i}a_{i}a_{i+12} : 0 \leq i \leq 23\}$, where the indices $i, i, i+12$ are all taken modulo $24$. This relation algebra admits a finite representation via a Comer scheme. See Section~\ref{sec:ex} for full details.

We next turn to investigating the cycle structure of these algebras. We are in fact able to produce a large number of these objects for different forbidden cycle configurations.

\begin{theorem} \label{thm:data}
We have the following.
\begin{enumerate}[label=(\alph*)]
\item For $n \in \{1, \ldots, 2000\} \setminus \{8,13\}$, the integral symmetric relation algebra on $n$ diversity atoms with forbidden cycles $\{ a_{i}a_{i}a_{i} : 0 \leq i < n\}$ admits a finite representation over a prime field Comer scheme.

\item For $n \in \{5, \ldots, 14\} \cup \{16, \ldots, 33\} \cup \{35, \ldots, 500\}$, the integral symmetric relation algebra on $n$ diversity atoms with forbidden cycles $\{ a_{i}a_{i}a_{i+1} : 0 \leq i < n\}$, where the indices $i, i, i+1$ are all taken modulo $n$, admits a finite representation over a prime field Comer scheme. 

\item For $n \in \{ 5, \ldots, 500\} \setminus \{12,14,18,28,36\}$, the integral symmetric relation algebra on $n$ diversity atoms with forbidden cycles $\{ a_{i}a_{i}a_{i+2} : 0 \leq i < n\}$, where the indices $i, i, i+2$ are all taken modulo $n$, admits a finite representation over a prime field Comer scheme. 
\end{enumerate}
\end{theorem}

\begin{remark}
We note that when $n$ is odd, forbidding the cycles $\{ a_{i}a_{i}a_{i+1} : 0 \leq i < n\}$ (where all indices are taken modulo $n$) yields the same relation algebra as when we forbid the cycles $\{ a_{i}a_{i}a_{i+2} : 0 \leq i < n\}$ (where all indices are again taken modulo $n$). See \Lem{lem:iso}.
\end{remark}

\begin{remark}
We note that Ramsey schemes also have close connections with other combinatorial structures such as association schemes, coherent configurations, and permutation groups \cite{ComerMonochrome, ComerCombinatorial}. Thus, our Comer scheme constructions may be of independent interest.
\end{remark}

The relation algebra $33_{65}$ is the integral symmetric relation algebra with atoms $1', a, b, c$, with $a$ flexible and forbidden cycles $ccc, bcc, cbb$. It is the last known relation algebra in the family $N_{65}$ with a flexible atom that is not known to admit a finite representation. To this end, we establish several lower bounds. Here, we consider not only $33_{65}$, but also $34_{65}$. By analyzing the combinatorial structure of the relation algebras in tandem with a SAT solver, we obtain the following.

\begin{theorem}
We have the following.
\begin{enumerate}[label=(\alph*)]
\item Any square representation of $33_{65}$ requires at least $24$ points.

\item Any square representation of $34_{65}$ requires at least $24$ points.
\end{enumerate}
\end{theorem}

\noindent \\ Furthermore, we rule out small cyclic group representations for $33_{65}$.

\begin{theorem}
There is no group representation of $33_{65}$ over $(\mathbb{Z}/n\mathbb{Z}, +)$ for any $n \leq 120$.    
\end{theorem}

\subsection{Mandatory cycles for algebras used}

Most of the small algebras that play a central role in this paper have three symmetric diversity atoms. There are ten possible diversity cycles in this situation -- three 1-cycles, six 2-cycles, and one 3-cycle.

The following table shows the mandatory cycles for four algebras that we consider in this paper. 

\begin{table}[h!]
    \centering
    \begin{tabular}{c|ccc|cccccc|c}
         & $aaa$ & $bbb$ & $ccc$ & $abb$ & $baa$ & $acc$ & $caa$ & $bcc$ & $cbb$ & $abc$ \\
         \hline
        $32_{65}$ &  $aaa$ &  &  & $abb$ & $baa$ & $acc$ & $caa$ &  &  & $abc$ \\
        $33_{65}$ &  $aaa$ & $bbb$ &  & $abb$ & $baa$ & $acc$ & $caa$ &  &  & $abc$ \\
        $34_{65}$ &  $aaa$ & $bbb$ & $ccc$ & $abb$ & $baa$ & $acc$ & $caa$ &  &  & $abc$ \\
        $42_{65}$ &  $aaa$ & $bbb$ & $ccc$ & $abb$ &  &  & $caa$ & $bcc$ &  & $abc$ \\
    \end{tabular}
    \caption{Mandatory cycles for four algebras}
    \label{tab:cycles}
\end{table}

\section{Preliminaries} \label{sec:Preliminaries}

\begin{definition} \label{Def:RAs}
A \textit{relation algebra} is an algebra $\langle A, \leq, \land, \vee, \neg, 0, 1, \circ, \breve{}, 1^{\prime} \rangle$ that satisfies the following.
\begin{itemize}
\item $\langle A, \leq, \land, \vee, \neg, 0, 1 \rangle$ is a Boolean algebra, with $\neg$ the unary negation operator, $0$ the identity for $\vee$, and $1$ the identity for $\land$. 

\item $\langle A, \circ, 1^{\prime} \rangle$ is a monoid, with $1^{\prime}$ the identity with respect to $\circ$. We refer to $\circ$ as \textit{(relational) composition}. So we have that relational composition is associative, and there is an identity $1^{\prime}$ with respect to $\circ$. 

\item $\breve{}$ \, is the unary converse operation, and the reduct $\langle A, \circ, \breve{}\;\rangle$ is an involuted monoid. Namely, $\breve{\breve{a}} = a$ for all $a \in A$, and $\widebreve{a \circ b} = \breve{b} \circ \breve{a}$ for all $a, b \in A$.

\item Converse and composition both distribute over disjunction. Precisely, for all $a, b, c \in A$, we have that:
\begin{align*}
&\widebreve{(a \vee b)} = \breve{a} \vee \breve{b}, \text{ and } \\
&(a \vee b) \circ c = (a \circ c) \vee (b \circ c).
\end{align*}
\item (Triangle Symmetry) For all $a, b, c \in A$, we have that:
\[
(a \circ b)\land c = 0 \iff (\breve{c} \circ a)\land \breve{b} = 0 \iff (b\circ \breve{c})\land \breve{a} = 0.
\]
\end{itemize}
\end{definition}

\noindent When the relation algebra is understood, we simply write $A$ rather than $\langle A, \leq, \land, \vee, \neg, 0, 1, \circ, \breve{}, 1^{\prime} \rangle$.

\begin{definition}
Let $A$ be a relation algebra. We say that $A$ is \textit{integral} if whenever $x \circ y = 0$, we have that $x = 0$ or $y = 0$.
\end{definition}

\begin{definition} \label{def:Atom}
Let $A$ be a relation algebra. We say that $a \in A$ is an \textit{atom} if $a \neq 0$ and $b < a \implies b = 0$. Furthermore, we say that $a$ is a \textit{diversity atom} if $a$ also satisfies $a \land 1^{\prime} = 0$. We say that a diversity atom is \textit{symmetric} if $a = \breve{a}$. 
\end{definition}

In this paper, attention will be restricted to finite relation algebras. This ensures that every element of a relation algebra can be written as the join of finitely many atoms.

\begin{remark}
In the special case when $a, b, c$ are diversity atoms, the Triangle Symmetry axiom defines an equivalence relation on such triples that corresponds to the symmetries of the triangle.
\end{remark}

\begin{definition}
For atoms $a, b, c$, the triple $(a, b, c)$ -- usually denoted $abc$ -- is called a \textit{cycle}. We say that the cycle $abc$ is \textit{forbidden} if $(a \circ b) \land c = 0$ and \textit{mandatory} if $a \circ b \geq c$. If $a, b, c$ are diversity atoms, then $abc$ is called a \textit{diversity cycle}. 
\end{definition}

Any cycle that is not forbidden is mandatory \cite[Chapter~6]{Madd}. 

\begin{remark} \label{obs:Composition}
We note that for symmetric integral relation algebras, the composition operation $\circ$ is determined by the mandatory diversity cycles. 
\end{remark}

\begin{definition} \label{def:flexible}
Let $f$ be a symmetric diversity atom. We say that $f$ is \textit{flexible} if for all diversity atoms $a, b$, we have that $abf$ is mandatory.
\end{definition}

\begin{definition}
We say that a relation algebra $A$ is \textit{representable} if there exists a set $U$ and an equivalence relation $E \subseteq U \times U$ such that $A$ embeds into
\[
\langle 2^{E}, \subseteq, \cup, \cap, ^{c}, \circ, ^{-1}, \emptyset, E, \text{Id}_{U} \rangle.
\]

\noindent Here, $^{c}$ is set complementation, and $^{-1}$ is the relational inverse. 
\end{definition}

We will only be concerned with \textit{simple} relation algebras, in which case, representability is equivalent to the existence of a set $U$ with the choice of $E = U \times U$. We call such a representation \textit{square}.

\begin{definition}
Let $A$ be a finite relation algebra. Denote:
\[
\text{Spec}(A) := \{ \alpha \leq \omega : A \text{ has a square representation over a set of cardinality } \alpha\}.
\]
\end{definition}

\noindent If $\text{Spec}(A)$ contains a natural number, then we say that $A$ admits a finite representation. The minimum element in $\spec(A)$ serves as a measure of combinatorial complexity for the relation algebra.

\begin{definition} \label{def:GroupRepresentation}
Let $G$ be a group. Define the \textit{complex algebra}:
\[
\mathrm{Cm}(G) = \langle 2^{G}, \subseteq, \cup, \cap, ^{c}, \cdot, ^{-1}, \emptyset, G, \{e\} \rangle,
\]

\noindent where $^{c}$ is the setwise complementation, $\cdot : 2^{G} \times 2^{G} \to 2^{G}$ is the map sending the pair $(X,Y)$ to 
\[
X \cdot Y = \{ xy : x \in X, y \in Y, \text{ and } xy \text{ is considered in the group} \}, 
\]

\noindent $^{-1} : 2^{G} \to 2^{G}$ maps $X \mapsto X^{-1} := \{ x^{-1} : x \in X \}$,  and $e$ is the identity in the group. A \textit{group representation} of the relation algebra $A$ over the group $G$ is an injective homomorphism $\varphi : A \to \text{Cm}(G)$. 
\end{definition}

\begin{remark}
    Given a group representation $\varphi$, we may construct a representation $A$ as follows. Take $U = G$. Now for each $X \subseteq G$ in $\text{Im}(\varphi)$, map $X$ to the relation $\{ (u, v) : uv^{-1} \in X \} \subseteq G \times G$. 
\end{remark}

\begin{remark}  \label{rmk:Constraints}
In light of Remark~\ref{obs:Composition}, deciding whether an integral relation algebra $A$ admits a finite representation is equivalent to finding a finite complete graph with an appropriate edge coloring. Note that if $A$ is symmetric, we consider the undirected complete graph. If $A$ is not symmetric, then we instead consider the directed complete graph. In this paper, attention will be restricted to the symmetric case.

Given a complete graph on $m$ vertices, we seek to color the edges using the diversity atoms as colors. Now suppose $a \in A$ is a diversity atom, and $uv$ is an edge colored $a$ (we abuse notation by using the diversity atom as the label for the edge color). If $abc$ is a mandatory diversity cycle, then there must exist a vertex $w$ such that $vw$ is colored $b$ and $uw$ is colored $c$. If instead $abc$ is a forbidden diversity cycle, then for any vertex $w$, we have that either $vw$ is not colored $b$ or $uw$ is not colored $c$. Now if there exists such a finite $m$, then $A$ is finitely representable. We may view the representation size $m$ as a measure of combinatorial complexity for a relation algebra, and so it is of interest to find the smallest such $m$.
\end{remark}

\subsection{Ramsey Schemes}

\begin{definition} \label{def:RamseyScheme}
Let $U$ be a set, and $m \in \mathbb{Z}^{+}$. A \textit{Ramsey scheme} in $m$ colors is a partition of $U \times U$ into $m+1$ pairwise-disjoint binary relations $\text{Id}, R_{0}, \ldots, R_{m-1}$ such that the following conditions hold:
\begin{enumerate}[label=(\Alph*)]
\item $R_{i}^{-1} = R_{i}$,
\item $R_{i} \circ R_{i} = R_{i}^{c}$, and
\item For all pairs of distinct $i, j \in [m-1]$, $R_{i} \circ R_{j} = \text{Id}^{c}$.
\end{enumerate}

\noindent Here, $\text{Id} = \{ (u, u) : u \in U\}$ is the identity relation over $U$.
\end{definition}

The usual method of constructing the relations $R_{0}, \ldots, R_{m-1}$ is a \textit{guess-and-check} approach due to Comer \cite{ComerMonochrome}, which works as follows. Fix $m \in \mathbb{Z}^{+}$.  We search over primes $p \equiv 1 \pmod{2m}$, where a desirable $p$ satisfies the following. Let $X_{0} := H \leq \mathbb{F}_{p}^{\times}$ be the unique subgroup of order $(p-1)/m$. Now let $X_{1}, \ldots, X_{m-1}$ be the cosets of $\mathbb{F}_{p}^{\times}/X_{0}$. In particular, as $\mathbb{F}_{p}^{\times}$ is cyclic, we may write $X_{i} = g^{i}X_{0} = \{ g^{am+i} : a \in \mathbb{Z}^{+} \},$ where $g$ is a generator of $\mathbb{F}_{p}^{\times}$. Suppose that $X_{0}, \ldots, X_{m-1}$ satisfy the following conditions: 
\begin{enumerate}[label=(\alph*)]
\item $X_{i} = -X_{i}$, for all $0 \leq i \leq m-1$,
\item $X_{i} + X_{i} = \mathbb{F}_{p} \setminus X_{i}$, for all $0 \leq i \leq m-1$, and
\item For all distinct $0 \leq i, j \leq m-1$, $X_{i} + X_{j} = \mathbb{F}_{p} \setminus \{0\}$.
\end{enumerate}
\noindent For each $0 \leq i \leq m-1$, define $R_{i} := \{(x,y) \in \mathbb{F}_{p} \times \mathbb{F}_{p} : x - y \in X_{i} \}.$ Here, the sets $R_{0}, \ldots, R_{m-1}$ together with $\text{Id} = \{ (u, u) : u \in \mathbb{F}_{p} \}$ are the atoms in our relation algebra. It is easy to check that conditions (a)--(c) on the sets $X_{0}, \ldots, X_{m-1}$ imply that conditions (A)--(C) from the definition of a Ramsey scheme are satisfied for the relations $R_{0}, \ldots, R_{m-1}$. 

We note that condition (b), that $X_{i} + X_{i} = \mathbb{F}_{p} \setminus X_{i}$, indicates that each $X_{i}$ is sum-free. The fact that $p \equiv 1 \pmod{2m}$ implies that $X_{0}$ has even order. It follows that $X_{0}$ is symmetric; i.e., $X_{0} = -X_{0}$. 

In \cite{Alm2017401AB}, Ramsey schemes were constructed for all $m \leq 2000$ except for $m = 8, 13$, and it was shown that if $p > m^{4} + 5$, then $X_{0}$ contains a solution to $x + y = z$ and so fails condition (b) of Comer's method. So in such cases, Comer's construction fails to yield an $m$-color Ramsey  scheme.

\section{Integral Symmetric Relation Algebras with Forbidden Cycle Configurations}

In this section, we generalize the notion of a Ramsey scheme to the setting where the cosets need not be sum-free. We then examine the relation algebras generated by these schemes. Precisely, we drop condition (b) in Comer's method, which corresponds to dropping condition (B) in Definition~\ref{def:ComerSchemes}.

\begin{definition} \label{def:ComerSchemes}
Let $p$ be a prime, $U = \mathbb{F}_{p}$, and $m$ a divisor of $(p-1)/2$. Let $H \leq \mathbb{F}_{p}^{\times}$ be the unique subgroup of index $m$, and let $X_{0}, \ldots, X_{m-1}$ be the cosets of $\mathbb{F}_{p}^{\times}/H$. A \textit{Comer scheme} in $m$ colors is a partition of $U \times U$ into $m+1$ binary relations $\text{Id}, R_{0}, \ldots, R_{m-1}$ as follows. Let $\text{Id}= \{ (x,x) : x \in \mathbb{F}_{p}\}$ and $R_i = \{ (x,y) \in \mathbb{F}_{p} \times \mathbb{F}_{p} : x - y \in X_{i} \}$. 
\end{definition}

\begin{remark}
We will be particularly interested in Comer schemes that satisfy the following additional condition: for all distinct $i, j, k \in [m-1]$, 
\begin{align}
R_{i} \circ R_{j} \supseteq R_{k}. \tag{$B^{\prime}$}    
\end{align}
\end{remark}

We construct Comer schemes using Comer's method \cite{ComerMonochrome}. Fix $m \in \mathbb{Z}^{+}$, and let $p \equiv 1 \pmod{2m}$ be a prime. Let $X_{0} := H \leq \mathbb{F}_{p}^{\times}$ be a subgroup of order $(p-1)/m$. Now let $X_{1}, \ldots, X_{m-1}$ be the cosets of $\mathbb{F}_{p}^{\times}/X_{0}$. In particular, as $\mathbb{F}_{p}^{\times}$ is cyclic, we may write $X_{i} = g^{i}X_{0} = \{ g^{am+i} : a \in \mathbb{Z}^{+} \},$ where $g$ is a generator of $\mathbb{F}_{p}^{\times}$. Suppose that $X_{0}, \ldots, X_{m-1}$ satisfy the following conditions:
\begin{enumerate}[label=(\alph*)]
\item $X_{i} = -X_{i}$, for all $0 \leq i \leq m-1$,

\item For all distinct $0 \leq i, j \leq m-1$, $X_{i} + X_{j} = \mathbb{F}_{p} \setminus \{0\}$.
\end{enumerate}
\noindent  For each $0 \leq i \leq m-1$, define $R_{i} := \{(x,y) \in \mathbb{F}_{p} \times \mathbb{F}_{p} : x - y \in X_{i} \}.$ Here, the sets $R_{0}, \ldots, R_{m-1}$ are the atoms in our relation algebra. It is easy to check that conditions (a) and (b) on the sets $X_{0}, \ldots, X_{m-1}$ imply that conditions (A) and (C) from the definition of a Ramsey scheme are satisfied for the relations $R_{0}, \ldots, R_{m-1}$. That is, we drop condition (B) from the definition of a Ramsey scheme.


We collect some preliminary observations about these Comer schemes. First, we observe that Comer's construction yields relation algebras that have \textit{rotational symmetry}.

\begin{lemma}\label{lem:1}
 Let  $n \in \mathbb{Z}^{+}$ and let
 $p = nk + 1$ be a prime number and $g$ a primitive root modulo $p$. 
 
 For $i \in \left\{0,1,\ldots,n-1\right\}$, define
\[ X_{i} = \left\{g^{i},g^{n + i},g^{2n + i},\ldots,g^{(k-1)n + i}\right\}\subseteq \mathbb{Z}/p\mathbb{Z}.\]
Then  $(X_0 + X_j) \cap X_{\ell} = \emptyset$ if and only if  $(X_i + X_{i+j}) \cap X_{i+\ell} = \emptyset$.

 \end{lemma}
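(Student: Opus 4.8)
The plan is to exploit the fact that scalar multiplication by $g^i$ is a bijection of $\zp$ that is \emph{both} additive and index-shifting on the cosets $X_0,\dots,X_{n-1}$; the asserted ``rotational symmetry'' is then just the statement that this one map realizes the rotation. First I would record that the subscripts may be read modulo $n$. Since $g$ has order $p-1 = nk$, the element $g^n$ has order $k$, so $X_0 = \langle g^n\rangle$ and each $X_\ell = g^\ell X_0$ is a coset depending only on $\ell \bmod n$; in particular $X_{\ell+n} = X_\ell$. This guarantees that the expressions $X_{i+j}$ and $X_{i+k}$ appearing in the statement are well defined.

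Next I would introduce the map $\mu_i\colon \zp \to \zp$ defined by $\mu_i(x) = g^i x$. Because $g^i$ is a nonzero element of the field, $\mu_i$ is a bijection; and because multiplication distributes over addition, $\mu_i$ is additive, i.e.\ $\mu_i(a+b) = \mu_i(a) + \mu_i(b)$. The key computation is that $\mu_i$ shifts the index of every coset: $\mu_i(X_\ell) = g^i g^\ell X_0 = g^{i+\ell} X_0 = X_{i+\ell}$.

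Then I would apply $\mu_i$ to the sumset. Using additivity together with the index-shift just established,
$$\mu_i(X_0 + X_j) = \{\, g^i a + g^i b : a \in X_0,\ b \in X_j \,\} = \mu_i(X_0) + \mu_i(X_j) = X_i + X_{i+j},$$
and likewise $\mu_i(X_k) = X_{i+k}$. Since $\mu_i$ is a bijection it commutes with intersection and sends $\emptyset$ to $\emptyset$, so
$$\mu_i\bigl((X_0 + X_j)\cap X_k\bigr) = (X_i + X_{i+j}) \cap X_{i+k}.$$
Consequently the left-hand intersection is empty precisely when the right-hand one is, which is exactly the claim.

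I do not anticipate a genuine obstacle here: the entire content is the observation that multiplication by $g^i$ respects sumsets (being additive) while permuting the cosets by a uniform shift (being index-shifting), so it conjugates the configuration $(X_0+X_j)\cap X_k$ onto $(X_i+X_{i+j})\cap X_{i+k}$. The only point that warrants a little care is the bookkeeping of subscripts modulo $n$, to be certain that $X_{i+j}$ and $X_{i+k}$ carry the intended meaning and that the equalities $\mu_i(X_\ell) = X_{i+\ell}$ hold on the nose as sets.
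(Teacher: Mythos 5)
Your proof is correct and is exactly the paper's argument: the paper simply says ``just multiply through by $g^i$,'' and you have written out that multiplication-by-$g^i$ argument in full detail (bijectivity, distributivity over sumsets, and the index shift $g^iX_\ell = X_{i+\ell}$). No gaps.
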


\begin{shortproof}
Multiply through by $g^i$.
\end{shortproof}

We now formalize the notion of an automorphism that respects this rotational symmetry.

\begin{definition}
Let $p=nk+1$ be prime with $k$ even, and let the $X_i$'s be as in Lemma \ref{lem:1}. Let $C(p,n)$ denote the proper relation algebra generated by the sets $R_{i}$ as in the definition of a Comer scheme. Then automorphisms of $C(p, n)$ can be seen as permutations of the atoms, as follows.
\[
    \Aut(C(p,n)) =\{\pi\in S_n: R_i\circ R_j\supseteq R_k \Leftrightarrow R_{\pi(i)}\circ R_{\pi(j)}\supseteq R_{\pi(k)}\}.
\]

\end{definition}

\noindent
Of course, the condition on the relations $R_i$ is equivalent to the following condition on the $X_i$'s:
\[
X_i+X_j\supseteq X_k\Leftrightarrow X_{\pi(i)}+X_{\pi(j)}\supseteq X_{\pi(k)}.
\]


It is natural to ask about relation algebras with forbidden cycles. In particular, given a forbidden cycle scheme and a prescribed number of atoms $n$, can we realize the corresponding relation algebra with a Comer scheme? This motivates the following definition.

\begin{definition}
Let $\mathcal{A}_n([i,i+j,i+\ell])$ denote the integral RA with $n$ symmetric diversity atoms $a_0, \ldots, a_{n-1}$ whose forbidden cycles are those of the form $\{ a_i a_{i+j} a_{i+\ell} : 0 \leq i < n \}$, with indices considered modulo $n$.
\end{definition}

\begin{remark}
We note that for any $m$-color Comer scheme, the automorphism group has a copy of $\mathbb{Z}/m\mathbb{Z}$. Thus, if we forbid one cycle --- say $a_{0}a_{0+j}a_{0+\ell}$ --- then we forbid $a_{i}a_{i+j}a_{i+\ell}$ for all $i$. See \Lem{lem:1}.
\end{remark}

\noindent We will be particularly interested in $\mathcal{A}_n([i,i,i+j])$, as these forbid a rotational class of $2$-cycles (bichromatic triangles). Furthermore, setting $j = 0$ yields  Ramsey schemes.

\begin{lemma}
$\Aut(\mathcal{A}_n([i,i,i+1])) \cong \mathbb{Z}/n\mathbb{Z}$.
\end{lemma}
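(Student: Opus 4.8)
The plan is to prove the isomorphism by identifying $\aut(\mathcal{A}_n([i,i,i+1]))$ with the group of index rotations, treating the whole problem at the level of the \emph{set of forbidden cycles}. Since the atoms are symmetric, the forbidden cycle $a_i a_i a_{i+1}$ is faithfully recorded by the multiset of indices $\{i,i,i+1\}$, so I would work with the collection $F = \{\{i,i,i+1\} : i \in \mathbb{Z}/n\mathbb{Z}\}$. By definition a permutation $\pi \in S_n$ lies in $\aut$ exactly when it preserves the composition table, which for symmetric diversity atoms reduces to the condition $c \in F \Leftrightarrow \pi(c) \in F$, where $\pi$ acts entrywise on multisets.

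For the easy inclusion, I would simply observe that $F$ is invariant under the rotation $\rho(i) = i+1 \pmod n$: indeed $\rho(\{i,i,i+1\}) = \{i+1,i+1,i+2\} \in F$, and $\rho$ restricts to a bijection of $F$. Hence every power of $\rho$ is an automorphism, producing a copy of $\mathbb{Z}/n\mathbb{Z}$ inside $\aut$; this is the abstract counterpart of the rotational symmetry in Lemma \ref{lem:1}.

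The substance is the reverse inclusion: every automorphism is one of these rotations. The key is to exploit the \emph{degenerate} shape of the forbidden cycles. Assuming $n \geq 2$, each multiset $\{i,i,i+1\}$ has the index $i$ with multiplicity two and the distinct index $i+1$ with multiplicity one. Fix $\pi \in \aut$ and an index $i$. Since $\pi$ is injective and $i \neq i+1$, the image multiset $\pi(\{i,i,i+1\}) = \{\pi(i),\pi(i),\pi(i+1)\}$ still has one doubled and one single entry; as it must belong to $F$, it equals $\{j,j,j+1\}$ for some $j$, and matching multiplicities forces $\pi(i) = j$ together with $\pi(i+1) = j+1$. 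Eliminating $j$ yields the recurrence $\pi(i+1) = \pi(i) + 1$ for every $i$. Writing $a = \pi(0)$ and iterating gives $\pi(i) = i + a$ for all $i$, that is $\pi = \rho^{\,a}$, and the wrap-around at $i = n-1$ is automatically consistent modulo $n$.

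Putting the two inclusions together, $\aut$ consists of exactly the $n$ rotations $\rho^{\,a}$, which form a cyclic group of order $n$. The step requiring the most care is the multiplicity bookkeeping in the reverse inclusion — in particular confirming that the doubled entry is genuinely distinguished, which is where the hypothesis $n \geq 2$ enters (so that $i \neq i+1$); the degenerate case $n = 1$ should be dispatched separately, but there $\aut$ is trivially cyclic as well. I do not anticipate a serious obstacle beyond making this combinatorial argument airtight, since the collapse of the recurrence onto rotations is forced.
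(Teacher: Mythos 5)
Your proof is correct and takes essentially the same route as the paper's: both use the fact that the doubled index in a forbidden cycle is distinguished, so $a_{\pi(i)}a_{\pi(i)}a_{\pi(i+1)}$ being forbidden forces $\pi(i+1)=\pi(i)+1$, and hence $\pi$ is a rotation. Your multiset bookkeeping and the explicit $n\geq 2$ caveat are just a more careful write-up of the same argument.
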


\begin{proof}
 Suppose $\pi \in \Aut(\mathcal{A}_n([i,i,i+1]))$ and $\pi(0) = x$. Since $a_0a_0a_1$ is forbidden, $a_{\pi(0)}a_{\pi(0)}a_{\pi(1)}$ is forbidden as well, so $a_xa_xa_{\pi(1)}$ is forbidden. But this forces $\pi(1)=x+1 \pmod{n}$, since if $xxy$ is forbidden, $y$ must be $x+1$. Similarly, $\pi(2)$ must be $x+2 \pmod{n}$, and so on. So $\pi$ must take the form $\pi(s) = x+s \pmod{n}$. All such permutations are clearly in $\Aut(\mathcal{A}_n([i,i,i+1]))$, and we have shown they are the only ones. So $\Aut(\mathcal{A}_n([i,i,i+1])) \cong \mathbb{Z}/n\mathbb{Z}$.
\end{proof}

\begin{lemma}\label{lem:iso}
If $\gcd(j,n)=1$, then $\mathcal{A}_n([i,i,i+j])) \cong \mathcal{A}_n([i,i,i+1])$.
\end{lemma}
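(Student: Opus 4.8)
The plan is to exhibit an explicit relabeling of the atoms and verify that it respects the forbidden-cycle structure. The general principle I would invoke is that a symmetric integral relation algebra all of whose atoms are symmetric diversity atoms is completely determined by its set of forbidden cycles; consequently an isomorphism between two such algebras on the same number of atoms is exactly a bijection $\sigma$ of the index set $\{0,\dots,n-1\}$ (equivalently, of $\mathbb{Z}/n\mathbb{Z}$) that carries forbidden cycles to forbidden cycles and allowed cycles to allowed cycles. The identity and converse require no attention here, since the identity is definable and every atom is symmetric, so composition is the only structure to preserve. Thus it suffices to produce such a $\sigma$ between $\mathcal{A}_n([i,i,i+j])$ and $\mathcal{A}_n([i,i,i+1])$.

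Since $\gcd(j,n)=1$, the residue $j$ is a unit modulo $n$; write $j^{-1}$ for its inverse. I would take $\sigma(x) = j^{-1}x \pmod{n}$, which is a bijection of $\mathbb{Z}/n\mathbb{Z}$ because $j^{-1}$ is a unit. The forbidden cycles of $\mathcal{A}_n([i,i,i+j])$ are precisely the triples $\{s,s,s+j\}$ for $s\in\mathbb{Z}/n\mathbb{Z}$. Applying $\sigma$ and using the one-line computation $\sigma(s+j)=j^{-1}(s+j)=j^{-1}s+1=\sigma(s)+1$, each such triple is sent to $\{\sigma(s),\sigma(s),\sigma(s)+1\}$, which is exactly a forbidden cycle of $\mathcal{A}_n([i,i,i+1])$. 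As $s$ runs over $\mathbb{Z}/n\mathbb{Z}$ and $\sigma$ is a bijection, $\sigma(s)$ runs over all of $\mathbb{Z}/n\mathbb{Z}$, so $\sigma$ maps the forbidden cycles of the source onto those of the target.

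To conclude I would upgrade this from a containment to a genuine bijection of the two forbidden-cycle sets, which then forces allowed cycles to map to allowed cycles. For this I would check that the $n$ triples $\{s,s,s+j\}$ are pairwise distinct: because $j\not\equiv 0$, each such triple has a well-defined doubled index $s$, so distinct $s$ give distinct triples; the same holds for the target. Since $\sigma$ is injective on triples and both forbidden sets have exactly $n$ elements, $\sigma$ induces a bijection between them, and hence a bijection of allowed cycles as well. Therefore $\sigma$ induces the desired relation-algebra isomorphism.

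I expect the only real subtlety — the main obstacle, such as it is — to lie not in the computation, which is immediate once $\sigma$ is chosen correctly, but in the two bookkeeping points: first, justifying that preserving the forbidden-cycle set is genuinely sufficient for an isomorphism (that these algebras are determined by their cycle structure, with the identity and converse handled for free because all atoms are symmetric); and second, confirming the distinctness and counting of forbidden triples, so that ``maps forbidden into forbidden'' can be strengthened to ``maps forbidden bijectively onto forbidden.'' Once those are secured, the linear map $\sigma(x)=j^{-1}x$ completes the argument.
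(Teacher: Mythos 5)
Your proof is correct and follows essentially the same route as the paper: the paper's proof uses the map $a_i \mapsto a_{j\cdot i \pmod n}$ from $\mathcal{A}_n([i,i,i+1])$ to $\mathcal{A}_n([i,i,i+j])$, which is precisely the inverse of your $\sigma(x)=j^{-1}x$. Your write-up simply spells out the verification (bijectivity on forbidden-cycle sets, sufficiency of cycle preservation) that the paper leaves as ``easy to check.''
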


\begin{proof}
 Let $\rho : \mathrm{At}(\mathcal{A}_n([i,i,i+1])) \to \mathrm{At}(\mathcal{A}_n([i,i,i+j]))$ be given by $a_i \mapsto a_{j\cdot i \pmod{n} }$. Since $\gcd(j,n)=1$, $\rho$ is a bijection. It is easy to check that $\rho$ preserves the forbidden cycles.
\end{proof}

The next lemma tells us that for Comer algebras, the isomorphism in Lemma \ref{lem:iso} arises in a particularly nice way.

\begin{lemma}
Let $[X_{i}, X_{j}, X_{\ell}]$ denote the set of cycles of the form $a_{i}a_{j}a_{\ell}$, where the indices are taken modulo $n$. If $C(p,n)$ has forbidden cycles $[X_i, X_i, X_{i+j}]$  and $\gcd(j,n)=1$, then $X_j$ contains a primitive root $g$, and reindexing using $g$ as a generator will give forbidden cycles $[X_i, X_i, X_{i+1}]$.
\end{lemma}

\begin{proof}
 Let $g$ be the primitive root that gives the indexing with forbidden cycles $[X_i, X_i, X_{i+j}]$. Now $g^\ell$ is also a primitive root modulo $p$ if $\gcd(\ell, p-1)=1$. We want $g^\ell\in X_j$, so we want to find an integer $a$ with $\gcd(an+j, p-1)=1$. Since $\gcd(j,n)=1$, Dirichlet's theorem on primes in arithmetic progressions gives some prime $p'=an+j$, and clearly $\gcd(p',p-1)=1$. Then $g^{p'}$ is a primitive root and is in $X_j$. 
\end{proof}

\begin{lemma}
If $\gcd(j,n)>1$, then $\Aut(\mathcal{A}_n([i,i,i+j]))$ contains a non-identity permutation $\pi$ that has fixed points. Hence $\mathcal{A}_n([i,i,i+j]) \not\cong \mathcal{A}_n([i,i,i+1])$.
\end{lemma}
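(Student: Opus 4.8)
The plan is to determine exactly which index permutations are automorphisms of $\mathcal{A}_n([i,i,i+j])$, and then to read off a non-identity one with a fixed point; the non-isomorphism statement will then follow from the earlier computation of $\aut(\mathcal{A}_n([i,i,i+1]))$. Since the atoms are symmetric, I treat each forbidden cycle as an unordered multiset, so the forbidden cycles here are exactly $\{i,i,i+j\}$ for $i \in \mathbb{Z}/n\mathbb{Z}$. As $j \not\equiv 0 \pmod n$, such a multiset has a unique \emph{doubled} index $i$ and a unique \emph{single} index $i+j$, and distinct $i$ give distinct cycles. Hence a permutation $\pi$ preserving the forbidden cycles must send the cycle with doubled index $i$ to the one with doubled index $\pi(i)$; comparing single indices gives the functional equation
\[ \pi(i+j) = \pi(i) + j \qquad (i \in \mathbb{Z}/n\mathbb{Z}). \]
Conversely any permutation satisfying this carries forbidden cycles to forbidden cycles, so it characterizes $\aut(\mathcal{A}_n([i,i,i+j]))$ completely.

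Next I would solve this equation. Setting $\sigma(i) = \pi(i) - i$, the equation becomes $\sigma(i+j) = \sigma(i)$, so $\sigma$ is constant on each coset of $\langle j\rangle = \langle d\rangle$, where $d = \gcd(j,n)$. Thus $\pi(i) = i + s_{i \bmod d}$ for some shift values $s_0,\ldots,s_{d-1} \in \mathbb{Z}/n\mathbb{Z}$ (constrained only by the requirement that $\pi$ be a bijection). This is the key structural point: when $d=1$ there is a single coset, $\sigma$ is globally constant, and $\pi$ is forced to be a translation, recovering the earlier lemma; it is precisely when $d > 1$ that coset-dependent shifts become available.

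To build the witness, note that $d \mid n$ and $0 < j < n$ force $d < n$, so $d$ is nonzero in $\mathbb{Z}/n\mathbb{Z}$, and since $d \ge 2$ there are at least two residue classes modulo $d$. Define $\pi$ to fix every index with $i \not\equiv 1 \pmod d$ and to send $i \mapsto i+d$ for every $i \equiv 1 \pmod d$. Adding $d$ cyclically permutes the coset $1 + \langle d\rangle$ onto itself, so $\pi$ is a bijection; it satisfies the functional equation because $d \mid j$ gives $i+j \equiv i \pmod d$; it is non-identity since $\pi(1) = 1+d \ne 1$; and it fixes $0$. This is the required non-identity automorphism with a fixed point.

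Finally, for the non-isomorphism I would invoke the lemma that every automorphism of $\mathcal{A}_n([i,i,i+1])$ is a translation $s \mapsto x+s$, so every non-identity automorphism there is fixed-point-free. An isomorphism of relation algebras induces a bijection of atom indices that conjugates one index-automorphism group onto the other inside $S_n$, and conjugation preserves cycle type, in particular the presence of a fixed point; hence ``possesses a non-identity automorphism with a fixed point'' is an isomorphism invariant. Since $\mathcal{A}_n([i,i,i+j])$ has such an automorphism and $\mathcal{A}_n([i,i,i+1])$ does not, they cannot be isomorphic. I expect the crux to be the first two steps---correctly reducing the automorphism condition to the functional equation and seeing that its solutions split over cosets of $\langle \gcd(j,n)\rangle$---after which the construction is immediate; the only delicate point in the last step is the (standard) fact that an RA isomorphism conjugates these permutation groups, which is what makes the fixed-point property a genuine invariant.
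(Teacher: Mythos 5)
Your proof is correct and uses essentially the same witness as the paper: a permutation that cyclically shifts one coset of $\langle\gcd(j,n)\rangle$ by $\gcd(j,n)$ and fixes everything else (the paper uses the coset $\langle x\rangle$ itself, written as the cycle $(0\ x\ 2x\ \ldots)$, where you use $1+\langle d\rangle$ so that $0$ is visibly fixed). Your reduction to the functional equation $\pi(i+j)=\pi(i)+j$ and the explicit conjugation argument for the non-isomorphism are sound elaborations of steps the paper leaves implicit, but the underlying approach is the same.
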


\begin{proof}
 Let $x=\gcd(j,n)>1$. Consider the permutation $\pi = (0\ x\  2x\  3x  \ldots )$, written in cycle notation. We claim that $\pi \in \Aut (\mathcal{A}_n([i,i,i+j])))$.  Consider the forbidden cycle $a_0a_0a_j$. Write $j=bx$ for some positive integer $b$.  Under $\pi$, this cycle $a_0a_0a_{bx}$ gets mapped to $a_xa_xa_{(b+1)x}$, and since $(b+1)x = x+j$, the cycle $a_xa_xa_{(b+1)x}$ is forbidden. In fact, $\pi$ just permutes the forbidden cycles $a_{\ell x}a_{\ell x}a_{\ell x + j}$ and leaves the other forbidden cycles fixed.
\end{proof}

\begin{example}
Consider $\mathcal{A}_6([i,i,i+2])$. Then $j=x=2$. The permutation $(0\ 2\ 4)$ permutes the forbidden cycles $a_0a_0a_2$, $a_2a_2a_4$, and $a_4a_4a_0$.  See Figure \ref{fig:triangles}.
\end{example}

\begin{figure}[t]
    \centering
    \includegraphics[width=3.5in]{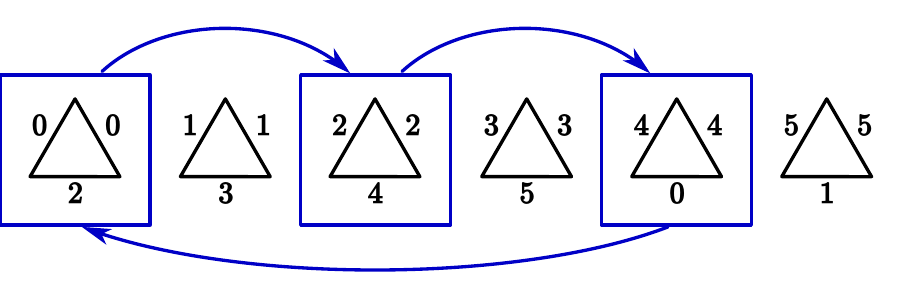}
    \caption{Depiction of the action of the permutation $(0\ 2\ 4)$ on the forbidden cycles of $\mathcal{A}_6([i,i,i+2])$.}
    \label{fig:triangles}
\end{figure}

The following lemma from Alon and Bourgain gives us just what we need to show that if $p$ is large relative to $n$, then $C(p,n)$ has only flexible diversity atoms.

\begin{lemma}[\cite{AlonBourgain}, Proposition 1.4]\label{lem:alon}
Let $q$ be a prime power and let $A$ be a multiplicative subgroup of
the finite field $\mathbb{F}_q$ of size $|A| = d \geq q^{1/2}$. Then, for any two subsets $B$, $C \subset \mathbb{F}_q$ satisfying $|B| \cdot |C| \geq q^3 / d^2$, there are $x \in B$ and $y \in C$ so that $x + y \in A$. 
\end{lemma}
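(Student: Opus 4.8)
The plan is to prove this as a counting statement: writing $N$ for the number of pairs $(x,y)\in B\times C$ with $x+y\in A$, I would show $N>0$ by isolating a main term and controlling an error term through character sums. Since $A$ is a subgroup of $\mathbb{F}_q^{\times}$ of order $d$, its index is $m=(q-1)/d$, and the indicator function of $A$ can be written in terms of the $m$ multiplicative characters that are trivial on $A$: writing $H$ for this group of characters, orthogonality gives $\mathbf{1}_A(z)=\frac{1}{m}\sum_{\chi\in H}\chi(z)$ for every $z\in\mathbb{F}_q$, under the convention $\chi(0)=0$. Substituting this into the definition of $N$ yields
\[
N=\frac{1}{m}\sum_{\chi\in H}\ \sum_{x\in B}\sum_{y\in C}\chi(x+y).
\]

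The principal character $\chi_0\in H$ contributes the main term. Since $\chi_0(x+y)=1$ unless $x+y=0$, its contribution is $\frac{1}{m}(|B||C|-Z)$, where $Z$ counts the pairs with $x+y=0$; as such a pair is determined by its first coordinate, $Z\le\min(|B|,|C|)\le\sqrt{|B||C|}$.

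For the error term I would bound $S_\chi:=\sum_{x\in B}\sum_{y\in C}\chi(x+y)$ for each non-principal $\chi\in H$. The key step is Cauchy--Schwarz in the variable $y$, followed by extending the outer sum to all of $\mathbb{F}_q$:
\[
|S_\chi|^2\le |C|\sum_{y\in\mathbb{F}_q}\Bigl|\sum_{x\in B}\chi(x+y)\Bigr|^2=|C|\sum_{x_1,x_2\in B}\ \sum_{y\in\mathbb{F}_q}\chi(x_1+y)\,\overline{\chi}(x_2+y).
\]
When $x_1=x_2$ the inner sum is $\sum_{y}|\chi(x_2+y)|^2=q-1$; when $x_1\neq x_2$, a direct substitution turns it into $\sum_{w\neq 0,1}\chi(w)$, which equals $-1$ because $\sum_{w\in\mathbb{F}_q}\chi(w)=0$. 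Hence the double sum equals $|B|(q-1)-(|B|^2-|B|)=|B|(q-|B|)<q|B|$, which yields the uniform bound $|S_\chi|<\sqrt{q\,|B||C|}$. Equivalently, this is the expander-mixing/Weil estimate that the sum Cayley graph on $\mathbb{F}_q$ with connection set $A$ has all nontrivial eigenvalues bounded by $\sqrt q$; the elementary evaluation above avoids the Gauss-sum machinery.

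Finally I would assemble the pieces. Summing the error bound over the $m-1$ non-principal characters and combining with the main term gives
\[
N\ \ge\ \frac{1}{m}\Bigl(|B||C|-Z-(m-1)\sqrt{q\,|B||C|}\Bigr),
\]
so it suffices to show $|B||C|>Z+(m-1)\sqrt{q\,|B||C|}$. Dividing by $\sqrt{|B||C|}$ and using $Z\le\sqrt{|B||C|}$, it is enough that $\sqrt{|B||C|}>1+(m-1)\sqrt{q}$. A short computation with $m=(q-1)/d$ shows $1+(m-1)\sqrt q<q^{3/2}/d$ whenever $q\ge 2$, while the hypothesis $|B||C|\ge q^3/d^2$ gives $q^{3/2}/d\le\sqrt{|B||C|}$; the assumption $d\ge q^{1/2}$ is what makes this hypothesis satisfiable at all, since it forces $q^3/d^2\le q^2$. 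The main obstacle is the character-sum bound $|S_\chi|<\sqrt{q\,|B||C|}$: obtaining the clean $\sqrt q$ saving is exactly what produces the exponent in $q^3/d^2$, and some care is needed to see that the inner orthogonality sum evaluates \emph{exactly} to $q-1$ or $-1$ rather than merely being estimated. The remaining steps---handling the diagonal count $Z$ and the final numerical comparison---are routine.
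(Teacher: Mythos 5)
The paper does not prove this lemma; it is imported verbatim from Alon--Bourgain (their Proposition~1.4) and used as a black box, so there is no internal proof to compare against. Your argument is a correct, self-contained derivation, and I checked the key computations: the orthogonality expansion $\mathbf{1}_A=\frac1m\sum_{\chi\in H}\chi$ is valid under the convention $\chi(0)=0$ (consistent with $0\notin A$); the inner sum in the Cauchy--Schwarz step does evaluate exactly to $q-1$ on the diagonal and to $\sum_{w\neq 0,1}\chi(w)=-1$ off it, giving $|S_\chi|^2\le |C|\,|B|(q-|B|)<q|B||C|$; and the final comparison $1+(m-1)\sqrt q<q^{3/2}/d$ reduces to $d(1-\sqrt q)<\sqrt q$, which holds for all $q\ge 2$. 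Your route---multiplicative characters for the coset structure of $A$ plus Cauchy--Schwarz and completion of the sum over $y$---differs in presentation from Alon and Bourgain's, which phrases the same estimate as the expander mixing lemma for the sum-Cayley graph on $\mathbb{F}_q$ with connection set $A$, bounding the nontrivial eigenvalues by $\sqrt q$ via Gauss sums. The two arguments are morally equivalent and give the identical threshold $|B||C|\ge q^3/d^2$; yours has the advantage of avoiding Gauss sums entirely, at the cost of obscuring the spectral interpretation that the paper's later discussion of quasirandomness leans on. Your observation that $d\ge q^{1/2}$ is used only to make the hypothesis $|B||C|\ge q^3/d^2$ satisfiable (since $|B||C|\le q^2$) is also accurate. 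The only presentational gap is that the case $m=1$ (i.e., $A=\mathbb{F}_q^\times$) should be noted as trivial, since then there are no non-principal characters in $H$; your inequality still covers it.
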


\begin{lemma}\label{lem:flex}
If $p>n^4+5$, then every diversity atom of $C(p,n)$ is flexible.
\end{lemma}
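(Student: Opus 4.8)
The plan is to recast flexibility as an additive statement about the $X_i$ and then invoke Lemma~\ref{lem:alon}. A diversity atom $A_k$ is flexible exactly when it lies in no forbidden cycle, i.e.\ when $A_k \leq A_i \circ A_j$ for every pair $i,j$; translated to the defining sets this reads $(X_i + X_j)\cap X_k \neq \emptyset$ for all $i,j$. Thus the whole lemma reduces to the single assertion that $(X_i+X_j)\cap X_k \neq \emptyset$ for \emph{every} triple $i,j,k \in \{0,\dots,n-1\}$, repeats allowed (so that even the cycle $a_ia_ia_i$ is covered).

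First I would reduce to the case $k=0$, where the target set is the multiplicative subgroup $H=X_0$ itself. Since each $X_\ell = g^\ell H$, multiplying through by $g^{-k}$ (the scaling underlying Lemma~\ref{lem:1}) shows that $(X_i+X_j)\cap X_k \neq \emptyset$ if and only if $(X_{i-k}+X_{j-k})\cap H \neq \emptyset$. So it suffices to show that for all $a,b$ there exist $x\in X_a$ and $y\in X_b$ with $x+y\in H$. This is precisely the shape of Lemma~\ref{lem:alon}: I would apply it with $q=p$, with the subgroup $A=H$ of size $d=|H|=(p-1)/n$, and with $B=X_a$, $C=X_b$, each also of size $(p-1)/n$. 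The conclusion $x+y\in H$ is exactly what is needed, provided the two hypotheses $d\geq p^{1/2}$ and $|B||C|\geq p^3/d^2$ hold.

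The remaining task is to check that $p>n^4+5$ forces both hypotheses. The subgroup-size condition $(p-1)/n \geq p^{1/2}$ is equivalent to $(p-1)^2 \geq n^2 p$ and is comfortably implied (it only requires $p$ of order $n^2$). The binding condition is the product bound $|B||C| = ((p-1)/n)^2 \geq p^3/d^2 = n^2 p^3/(p-1)^2$, which simplifies to $(p-1)^4 \geq n^4 p^3$; writing this as $p(1-1/p)^4 \geq n^4$ and using $(1-1/p)^4 \geq 1-4/p$ shows it holds once $p \geq n^4+4$, so $p>n^4+5$ suffices with room to spare. I expect this arithmetic --- matching the clean hypothesis $p>n^4+5$ to Alon--Bourgain's size requirement --- to be the only place demanding real care; the rest is the translation of flexibility into a sumset condition plus a single application of Lemma~\ref{lem:alon}.
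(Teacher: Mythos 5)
Your proposal is correct and follows essentially the same route as the paper: reduce flexibility to the sumset condition $(X_i+X_j)\cap X_0\neq\emptyset$ (using the rotational symmetry of Lemma~\ref{lem:1} to fix $k=0$), then apply Lemma~\ref{lem:alon} with $A=X_0$, $B=X_i$, $C=X_j$ and verify that $p>n^4+5$ yields $(p-1)^4\geq n^4p^3$. Your explicit check of the hypothesis $d\geq p^{1/2}$ and the Bernoulli-inequality arithmetic are slightly more careful than the paper's own one-line verification, but the argument is the same.
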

\begin{proof}
 We need to show $(X_i+X_j)\cap X_0\neq \varnothing$ for arbitrary $i$ and $j$. Set $q=p$, $A=X_0$, $B=X_i$, and $C=X_j$ in Lemma \ref{lem:alon}. Then $|A|=|B|=|C|=(p-1)/n$. Then we need $|B||C| \geq q^3 / d^2$, which translates to $(p-1)^4 \geq n^4p^3$, which is satisfied when $p>n^4+5$. Then all diversity cycles are mandatory by Lemma~\ref{lem:1}.
\end{proof}

\section{Constructing Comer Schemes}

In this section, we document our constructions for \Thm{thm:data}. Some data are summarized in Table \ref{tab:my_label} below. While for some small $n$, there is no construction of a Comer RA representation for $\mathcal{A}_n([i,i,i+j])$ for $j=0,1,2$, it would seem for large enough $n$ there is always some modulus $p$ that works. 

Representations of $\mathcal{A}_n([i,i,i+1])$ exist for all $35\leq n \leq 500$. In Figure \ref{fig:12cycle}, we compare the smallest modulus $p$ for representations over $C(p,n)$ for $\mathcal{A}_n([i,i,i])$ vs $\mathcal{A}_n([i,i,i+1])$. The growth is a bit slower for the latter. 

\begin{table}[H]\label{tab}
    \centering
    \begin{tabular}{cccc}
    n & $\mathcal{A}_n([i,i,i]))$ & $\mathcal{A}_n([i,i,i+1]))$ & $\mathcal{A}_n([i,i,i+2]))$ \\
    \toprule
    1 & 2  & x  & x  \\[-0.4ex]
    2 & 5  & x  & x  \\[-0.4ex]
    3 & 13  & x  & x  \\[-0.4ex]
    4 & 41  &  x & x  \\[-0.4ex]
    5 & 71  & 61  & --  \\[-0.4ex]
    \cmidrule{2-4}
    6 &  97 & 109  & x  \\[-0.4ex]  
    7 & 491  & 127  & --  \\[-0.4ex]
    8 & x  & 257  & x  \\[-0.4ex]
    9 & 523 & 307  &  -- \\[-0.4ex]
    10 & 1181  & 641  & 421  \\[-0.4ex]
    \cmidrule{2-4}
    11 & 947  & 331  & --  \\[-0.4ex]
    12 & 769  & 673  & x  \\[-0.4ex]
    13 & x  & 667  & --  \\[-0.4ex]
    14 &  1709 & 953  & x  \\[-0.4ex]
    15 & 1291  & x  & x  \\[-0.4ex]
    16 &  1217 & 2593  & 1697  
\end{tabular}
    \caption{Smallest modulus for a representation over a $C(p,n)$, or x if none exists.  The ``--'' indicates that an entry is redundant (in light of Lemma \ref{lem:iso}).}
    \label{tab:my_label}
\end{table}

\vspace{-5ex}

\begin{figure}[H]
    \centering
    \includegraphics[width=5in]{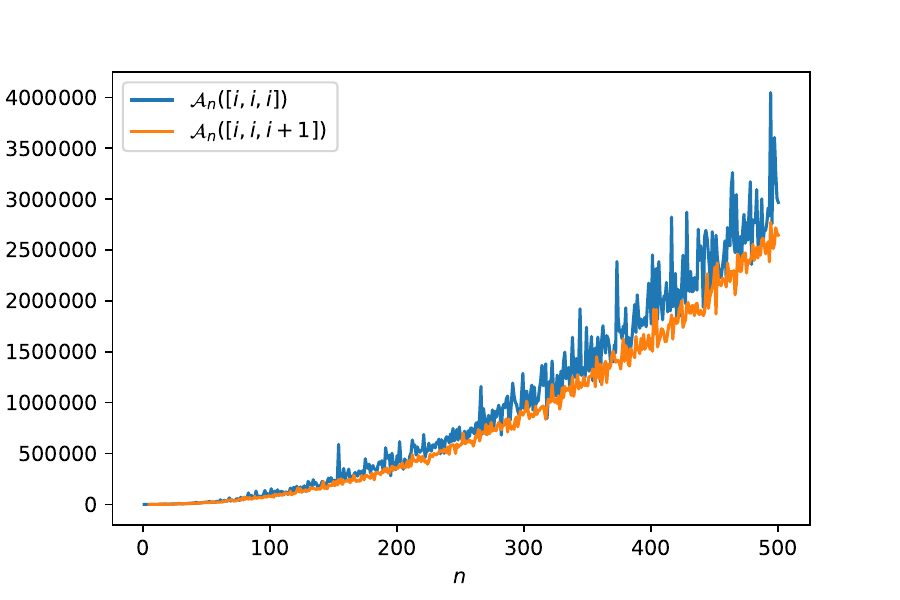}
    \caption{Smallest modulus $p$ over which $\mathcal{A}_n([i,i,i])$ and $\mathcal{A}_n([i,i,i+1])$ are representable as a $C(p,n)$}
    \label{fig:12cycle}
\end{figure}

\section{A cyclic group representation of relation algebra $34_{65}$}\label{sec:ex}

As an application, we give the first known finite representation of $34_{65}$.  Relation algebra $34_{65}$ has four symmetric atoms $1'$, $a$, $b$, and $c$, with forbidden cycles $bbc$ and $ccb$.  The atom $a$ is flexible, hence $34_{65}$ is representable over a countable set. 

We noticed that it would be sufficient to find a prime $p=nk+1$, $k$ and $n$ both even, such that $C(p,n)$ has $[i, i, i+n/2]$ as its only forbidden class.  Then we could map $b$ to $X_0$, map $c$ to $X_{n/2}$, and map $a$ to the union of all the other $X_i$'s; in other words, $34_{65}$ embeds in $\mathcal{A}_n([i,i,i+n/2]))$ for all even $n>4$. There's no limit to how big $p$ can be, since $n$ can also be as large as necessary; we just throw ``everything else'' into the image of $a$. A computer search using the fast algorithm from \cite{AlmYlv} quickly found a hit: for $p=3697$ and $n=24$, $[i, i, i+12]$ alone is forbidden. (This indexing is for the primitive root $g=5$ in $\mathbb{F}_{3697}$.)

\section{Lower Bounds on Arbitrary Representations}

\subsection{$\mathcal{A}_{n}([i, i+j, i+\ell])$}

\begin{proposition} \label{prop:GeneralLowerBound}
Let $i, j, \ell \in \{0, \ldots, n-1\}$. Any finite representation of $\mathcal{A}_{n}([i,i+j,i+\ell])$ requires at least $n^{2}$ points. 
If furthermore $j = \ell$, then at least $n^{2}+1$ points are required.
\end{proposition}

\begin{proof}
We phrase the proof in the language of graph theory (see \Rmk{rmk:Constraints}). The atoms constitute the edge colors of a complete graph. Fix $i \in [n]$, and let $uv$ be an edge colored $a_{i}$ (where here, $a_{i}$ is a diversity atom of $\mathcal{A}_{n}([i, i+j, i+\ell])$). 

Any non-forbidden triangle must be included in our graph. Let $c_{1}, c_{2}, c_{3} \in \mathcal{A}_{n}([i, i+j, i+\ell])$ be diversity atoms (colors on the graph). Suppose that $c_{1}c_{2}c_{3}$ is a mandatory triangle. If $xy$ is an edge colored $c_{1}$ and $w$ is a vertex such that $xw$ is colored $c_{2}$ and $yw$ is colored $c_{3}$, then we say that $w$ witnesses the $(c_{2}, c_{3})$ need for $xy$.

If $j \neq \ell$, then $(a_{i+j}, a_{i+\ell})$ and $(a_{i+\ell}, a_{i+j})$ are forbidden. So $uv$ has $n^{2} - 2$ needs, for which we need distinct witnesses. Together with $u,v$, this yields $n^{2}$ points in total.  

If instead $j = \ell$, we only forbid $(a_{i+j}, a_{i+j})$. Thus, $uv$ has $n^{2}-1$ needs in this case. Together with $u,v$, this yields $n^{2}+1$ points in total.
\end{proof}

\begin{remark} \label{rmk:LowerBounds}
In light of \Prop{prop:GeneralLowerBound}, we obtain that the representations we found for $\mathcal{A}_{n}([i,i,i])$ with $n = 1, 2$ were indeed minimal representations. \Prop{prop:GeneralLowerBound} only provides a lower bound of $10$ points for $\mathcal{A}_{3}([i,i,i])$. However, using a SAT solver (see \href{https://github.com/michaellevet/Comer-Schemes/blob/main/A3_iii.ipynb}{here}), we were able to show that there is no representation on fewer than $13$ points. Thus, the representation we found was indeed minimal. (This result seems to be folklore.) 

The algebra $\mathcal{A}_{3}([i,i,i])$, is in fact the $3$-color Ramsey algebra; that is, the relation algebra with diversity atoms $1', a, b, c$ and forbidden cycles $aaa, bbb, ccc$. It is folklore amongst relation algebra specialists that $\Spec(\mathcal{A}_{3}([i,i,i])) = \{13,16\}$, though a proof appears not to have been written down. Our result verifies that the minimum element in $\Spec(\mathcal{A}_{3}([i,i,i]))$ is indeed $13$.

In the case of $n = 4$, we used a SAT solver (see \href{https://github.com/michaellevet/Comer-Schemes/blob/main/A4_iii.ipynb}{here}) to establish a lower bound of $21$ points required to represent $\mathcal{A}_{4}([i,i,i])$. In particular, it follows that if there exists a prime field Comer scheme representation, then at least $23$ points are required.
\end{remark}

We now document our SAT solver approach. Our goal is to determine whether $\mathcal{A}_{n}([i,i,i])$ has a representation on $m$ points. In particular, we design a formula $\Phi^{(m)}$ that is a necessary condition for $\mathcal{A}_{n}([i,i,i])$ to be representable on $m$ points. So if $\Phi^{(m)}$ is unsatisfiable, then $\mathcal{A}_{n}([i,i,i])$ is not representable on $m$ points.

For $0 \leq i < j < m$ and $0 \leq k < n$, we have a variable $\phi_{i,j,k} = \texttt{true}$ which indicates that the edge $x_{i}x_{j}$ is colored $k$. Here, we interpret $k$ to be the atom $a_{k}$. We now define a formula to assert that each edge $x_{ij}$ receives exactly one color.
\[
\Phi_{0,n}^{(m)} := \left(\bigwedge_{i < j} \bigvee_{k=0}^{n-1} \phi_{i,j,k} \right) \land \left( \bigwedge_{i < j} \bigwedge_{0 \leq k < h < m} \neg \phi_{i,j,k} \vee  \neg \phi_{i,j,h} \right).
\]

Now fix an atom. Without loss of generality, we may choose $a_{0}$. Consider an edge in our graph labeled using $a_{0}$. This edge has $n^{2} - 1$ needs. For $0 \leq i < n$, let $R_{i}$ be the set of edges incident (at either endpoint of this edge labeled $a_0$) to $a_{0}$ that are colored $i$. Define:
\[
\Phi_{1,n}^{(m)} := \bigwedge_{i=0}^{n-1} \bigwedge_{(u, v) \in R_{i}} \phi_{u,v,i}.
\]

Now $\Phi_{1,n}^{(m)}$ asserts that the needs of $a_{0}$ are satisfied, and that the corresponding edges of the graph are colored correctly. 

For atom $0 \leq i < n$, the needs are:
\[
N_{i} := \{0, \ldots, n-1\}^{2} \setminus \{ (i,i)\}.
\]

Define:
\[
\Phi_{2,n}^{(m)} := \bigwedge_{i=0}^{n-1} \bigwedge_{(u,v) \in R_{i}} \bigwedge_{(c_{1}, c_{2})  \in N_{i}} \bigvee_{k \not \in \{u,v\}} (\phi_{u,k,c_{1}} \land \phi_{v,k,c_{2}}),
\]
which asserts that each pre-colored edge has its needs satisfied. We next define a formula to assert that every edge that is not pre-colored has its needs satisfied. Let:
\[
R := \bigcup_{i=0}^{n-1} R_{i}.
\]

Now define:
\[
\Phi_{3,n}^{(m)} := \bigwedge_{(u,v) \not \in R} \bigvee_{i=0}^{n-1} \left[\phi_{u,v,i} \land \left( \bigwedge_{(c_{1}, c_{2}) \in N_{i}} \bigvee_{k \not \in \{u,v\}} (\phi_{u,k,c_{1}} \land \phi_{v,k,c_{2}}) \right) \right]
\]

Finally, we define a formula to forbid monochromatic triangles.

\[
\Phi_{4,n}^{(m)} := \bigwedge_{i=1}^{n-1} \bigwedge_{0\leq u < v < w < m} \neg(\phi_{u,v,i} \land \phi_{u,w,i} \land \phi_{v,w,i}).
\]

Finally, define:
\[
\Phi_{n}^{(m)} := \bigwedge_{j=0}^{4} \Phi_{j}^{(m)}.
\]

Using a SAT Solver, we verified that $\Phi_{3}^{(m)}$ is unsatisfiable for $m = 11, 12$. Similarly, we verified that $\Phi_{4}^{(m)}$ is unsatisfiable for $m = 17, \ldots, 20.$

\subsection{$33_{65}$}

In this section, we consider the relation algebra $33_{65}$, which has atoms $1', a, b, c$, with $a$ flexible and the following cycles forbidden: $ccc, bcc, cbb$. We use the language of graph theory to discuss the relation algebra (see \Rmk{rmk:Constraints}). The atoms constitute the edge colors of a complete graph. We use the color red to correspond to the flexible atom, blue to correspond to the atom $b$, and green to correspond to the atom $c$. Thus, the forbidden triangles are precisely the ones containing all-green edges, or only blue and green edges with at least one green edge.

\begin{lemma} \label{lem:LowerBound1}
If $33_{65}$ is finitely representable, then any finite representation must have at least $15$ points.
\end{lemma}

\begin{proof}
We refer to Figure~\ref{fig:3365}. Let $x_{0}x_{1}$ be colored according to the flexible atom, for which we use the color red. The needs of $x_{0}x_{1}$ are precisely $\{r, b, g\} \times \{r, b, g\}$, yielding a total of $11$ points. Now fix $i \in \{0, 1\}$. Suppose that $x_{i}x_{j}, x_{i}x_{k}$ incident to $x_{i}$ where at least one such edge is green, and the other is either blue or green. As the all green, blue-blue-green, and blue-green-green triangles are forbidden, $x_{j}x_{k}$ is necessarily red. In particular, this implies that the following edges are red (see Figure~\ref{fig:3365}):
\begin{itemize}
\item $x_{2}: x_{2}x_{3}, x_{2}x_{4}, x_{2}x_{5}, x_{2}x_{6}, x_{2}x_{7}, x_{2}x_{8}, x_{2}x_{10}$ (7 edges)
\item $x_{3}: x_{3}x_{4}, x_{3}x_{5}, x_{3}x_{6}, x_{3}x_{7}$ (4 edges)
\item $x_{4}: x_{4}x_{5}, x_{4}x_{6}, x_{4}x_{7}, x_{4}x_{8}$ (4 edges)
\item $x_{5}: x_{5}x_{7}, x_{5}x_{8}, x_{5}x_{10}$ (3 edges)

\item $x_{7}: x_{7}x_{8}$ (1 edge)
\item $x_{8}: x_{8}x_{10}$ (1 edge)
\end{itemize}

Consider the red edge $x_{2}x_{5}$, which has $9$ needs. The g-b need is met by $x_{0}$, and the g-g need is met by $x_1$. Now the r-r need is met by $x_3, x_4, x_7, x_{8}, x_{10}$. We can meet the r-g need with $x_6$ and an arbitrary need with $x_{9}$. This leaves $4$ unsatisfied needs, necessitating $4$ additional points. So we require at least $15$ points.
\end{proof}

We now use a SAT solver to improve our lower bound. For $n \geq 15$, we build a Boolean formula $\Phi^{(n)}$ whose satisfiability is a necessary condition for $33_{65}$ to be representable on $n$ points. So if $\Phi^{(n)}$ is not satisfiable, then $33_{65}$ does not admit a representation on $n$ points.

For all $0 \leq i, j < n$ and all $0 \leq k \leq 2$, we have a variable $\phi_{i,j,k}$. We interpret $k = 0$ to be the color red, $k = 1$ to be the color blue, and $k = 2$ to be the color green. So $\phi_{i,j,k} = \texttt{true}$ indicates that the edge $x_{i}x_{j}$ is colored using $k$. We now define a formula to assert that each edge $x_{i}x_{j}$ receives exactly one color:
\[
\Phi_{0}^{(n)} := \bigwedge_{i<j} [ (\phi_{i,j,0} \vee \phi_{i,j,1} \vee \phi_{i,j,2})  \wedge (\neg\phi_{i,j,0} \vee \neg\phi_{i,j,1})  \wedge (\neg\phi_{i,j,0} \vee \neg\phi_{i,j,2})  \wedge (\neg\phi_{i,j,1} \vee \neg\phi_{i,j,2}) ]
\]

Now consider the points in Figure \ref{fig:3365}. Let $R$ be the set of red edges pictured in Figure \ref{fig:3365} together with the red edges specified in \Lem{lem:LowerBound1}. Let $B$ be the set of blue edges appearing in Figure \ref{fig:3365}, and let $G$ be the set of green edges appearing in Figure \ref{fig:3365}.

Define:
\[
    \Phi_{1}^{(n)} = \left(\bigwedge_{(i,j)\in R} \phi_{i,j,0}\right) \wedge \left(\bigwedge_{(i,j)\in B} \phi_{i,j,1}\right) \wedge \left(\bigwedge_{(i,j)\in G} \phi_{i,j,2} \right)
\]

Then $\Phi_{1}^{(n)}$ asserts that any  edges colored in Figure \ref{fig:3365} and \Lem{lem:LowerBound1} are colored correctly. Define the sets:
\begin{align*}
\text{BN} := \{(0,0), (0, 1), (0, 2), (1,0), (2, 0), (1,1) \} \\
\text{GN} := \{ (0, 0), (0, 1), (0, 2), (1,0), (2, 0)\}
\end{align*}
to denote the blue needs and green needs respectively. We now define formulas to assert that each edge in Figure \ref{fig:3365} and \Lem{lem:LowerBound1} has its needs met.
\begin{align*}
\Phi_{2}^{(n)} &= \bigwedge_{(i,j)\in R}\left[ \bigwedge_{c_1,c_2\in \{0,1,2\}} \left(\bigvee_{i\neq k \neq j} \phi_{i,k,c_1}\wedge\phi_{k,j,c_2}\right)\right] \\
\Phi_{3}^{(n)} &= \bigwedge_{(i,j)\in B}\left[ \bigwedge_{(c_1,c_2)\in BN} \left(\bigvee_{i\neq k \neq j} \phi_{i,k,c_1}\wedge\phi_{k,j,c_2}\right)\right] \\
\Phi_{4}^{(n)} &= \bigwedge_{(i,j)\in G}\left[ \bigwedge_{(c_1,c_2)\in GN} \left(\bigvee_{i\neq k \neq j} \phi_{i,k,c_1}\wedge\phi_{k,j,c_2}\right)\right].
\end{align*}

We next define a formula to assert that every edge that is not pre-colored has its needs satisfied.

\begin{align*}
     \Phi_{5}^{(n)} = \bigwedge_{(i,j)\notin R\cup B \cup G } & \phi_{i,j,0} \wedge \left[ \bigwedge_{(c_1,c_2)\in \{0,1,2\}} \left(\bigvee_{i\neq k \neq j} \phi_{i,k,c_1}\wedge\phi_{k,j,c_2}\right)\right] \\
     \vee \ &\phi_{i,j,1} \wedge \left[ \bigwedge_{(c_1,c_2)\in BN} \left(\bigvee_{i\neq k \neq j} \phi_{i,k,c_1}\wedge\phi_{k,j,c_2}\right)\right] \\
      \vee \  &\phi_{i,j,2} \wedge \left[ \bigwedge_{(c_1,c_2)\in GN} \left(\bigvee_{i\neq k \neq j} \phi_{i,k,c_1}\wedge\phi_{k,j,c_2}\right)\right]. 
\end{align*}
   
It remains to forbid cycles that contain (i) at least one green edge, and (ii) only blue and green edges.
\begin{align*}
\Phi_{6}^{(n)} := \bigwedge_{i<j<k} [ &\neg(\varphi_{i,j,2} \land \bigwedge_{c_{1},c_{2} \in \{1,2\}} (\varphi_{i,k,c_{1}} \land \varphi_{j,k,c_{2}})) \land \\
&\neg(\varphi_{i,k,2} \land \bigwedge_{c_{1},c_{2} \in \{1, 2\}} (\varphi_{i,j,c_{1}} \land \varphi_{j,k,c_{2}})) \land \\
&\neg(\varphi_{j,k,2} \land \bigwedge_{c_{1},c_{2} \in \{1, 2\}} (\varphi_{i,j,c_{1}} \land \varphi_{i,j,c_{2}})) ]
\end{align*}

\begin{lemma}
If $15 \leq n \leq 23$, then $n \not \in \Spec(33_{65})$.
\end{lemma}

\begin{proof}
For each $15 \leq n \leq 23$, we define:
\[
\Phi^{(n)} := \bigwedge_{i=0}^{6} \Phi_{i}^{(n)}.
\]

\noindent We have verified the unsatisfiability of each such $\Phi^{(n)}$ with a SAT solver.
\end{proof}

\begin{remark}
Our SAT solver code can be found \href{https://github.com/michaellevet/Comer-Schemes/blob/main/33_65.ipynb}{here}.
\end{remark}

\subsection{$34_{65}$}

We again phrase our results in the language of graph theory (see \Rmk{rmk:Constraints}). We use the color red to correspond to the flexible atom, blue to correspond to the atom $b$, and green to correspond to the atom $c$. Thus, the forbidden triangles are the blue-blue-green and blue-green-green ones.

\begin{lemma}
Any finite representation of $34_{65}$ must have at least $13$ points.
\end{lemma}

\begin{proof}
We refer to Figure \ref{fig:3465}. Let $x_{0}x_{1}$ be a red edge. The needs of $x_{0}x_{1}$ are precisely $\{r, b, g \} \times \{r, b, g\}$, yielding a total of $11$ points. Now fix $i \in \{0, 1\}$. Suppose that $x_{i}x_{j}$ is blue and $x_{i}x_{k}$ is green (or vice-versa). As blue-blue-green and blue-green-green triangles are forbidden, $x_{j}x_{k}$ is necessarily red. In particular, this implies that the following edges are red:
\begin{itemize}
    \item $x_{2}$: $x_{2}x_{4}$, $x_{2}x_{6}$, $x_{2}x_{7}$, $x_{2}x_{8}$, $x_{2}x_{9}$ ($5$ edges)
    
    \item $x_{3}:$ $x_{3}x_{6}, x_{3}x_{7}, x_{3}x_{8}$ ($3$ edges)
    
    \item $x_{4}$: $x_{4}x_{5}, x_{4}x_{6}$, $x_{4}x_{7}, x_{4}x_{8}$ ($4$ edges)
    
    \item $x_{5}$: $x_{5}x_{7}, x_{5}x_{9}$ ($2$ edges)
    
    \item $x_{6}$: $x_{6}x_{7}, x_{6}x_{9}$ ($2$ edges)
\end{itemize}

Now take the red edge $x_{2}x_{7}$, which has $9$ needs. Observe that the blue-green need is witnessed twice by $x_{0}, x_{1}$, and the red-red need is witnessed twice by $x_{4}, x_{6}$.

We may maximize the number of needs satisfied with the following configuration.
\begin{itemize}
    \item $x_{3}$ witnesses the blue-red need.
    \item $x_{5}$ witnesses the green-red need.
    \item $x_{8}$ witnesses the red-blue need.
    \item $x_{9}$ witnesses the red-green need.
    \item $x_{10}$ witnesses the green-blue need.
\end{itemize}

So there are two unmet needs, which necessitate two additional points. This brings our total to a minimum of $13$ points.
\end{proof}

We use a SAT solver almost similarly as in the case of $33_{65}$ to improve the lower bound. Now consider the points in Figure \ref{fig:3365}. Let $R$ be the set of red edges pictured in Figure \ref{fig:3465} together with the red edges specified in \Lem{lem:LowerBound1}. Let $B$ be the set of blue edges appearing in Figure \ref{fig:3465}, and let $G$ be the set of green edges appearing in Figure \ref{fig:3465}.

Define:
\[
    \Phi_{1}^{(n)} = \left(\bigwedge_{(i,j)\in R} \phi_{i,j,0}\right) \wedge \left(\bigwedge_{(i,j)\in B} \phi_{i,j,1}\right) \wedge \left(\bigwedge_{(i,j)\in G} \phi_{i,j,2} \right)
\]

Then $\Phi_{1}^{(n)}$ asserts that any  edges colored in Figure \ref{fig:3365} and \Lem{lem:LowerBound1} are colored correctly. Define the sets:
\begin{align*}
\text{BN} := \{(0,0), (0, 1), (0, 2), (1,0), (2, 0), (1, 1) \} \\
\text{GN} := \{ (0, 0), (0, 1), (0, 2), (1,0), (2, 0), (2,2)\}
\end{align*}
to denote the blue needs and green needs respectively. We now define formulas to assert that each edge in Figure \ref{fig:3365} and \Lem{lem:LowerBound1} has its needs met.
\begin{align*}
\Phi_{2}^{(n)} &= \bigwedge_{(i,j)\in R}\left[ \bigwedge_{c_1,c_2\in \{0,1,2\}} \left(\bigvee_{i\neq k \neq j} \phi_{i,k,c_1}\wedge\phi_{k,j,c_2}\right)\right] \\
\Phi_{3}^{(n)} &= \bigwedge_{(i,j)\in B}\left[ \bigwedge_{(c_1,c_2)\in BN} \left(\bigvee_{i\neq k \neq j} \phi_{i,k,c_1}\wedge\phi_{k,j,c_2}\right)\right] \\
\Phi_{4}^{(n)} &= \bigwedge_{(i,j)\in G}\left[ \bigwedge_{(c_1,c_2)\in GN} \left(\bigvee_{i\neq k \neq j} \phi_{i,k,c_1}\wedge\phi_{k,j,c_2}\right)\right].
\end{align*}

We next define a formula to assert that every edge that is not pre-colored has its needs satisfied.

\begin{align*}
     \Phi_{5}^{(n)} = \bigwedge_{(i,j)\notin R\cup B \cup G } & \phi_{i,j,0} \wedge \left[ \bigwedge_{(c_1,c_2)\in \{0,1,2\}} \left(\bigvee_{i\neq k \neq j} \phi_{i,k,c_1}\wedge\phi_{k,j,c_2}\right)\right] \\
     \vee \ &\phi_{i,j,1} \wedge \left[ \bigwedge_{(c_1,c_2)\in BN} \left(\bigvee_{i\neq k \neq j} \phi_{i,k,c_1}\wedge\phi_{k,j,c_2}\right)\right] \\
      \vee \  &\phi_{i,j,2} \wedge \left[ \bigwedge_{(c_1,c_2)\in GN} \left(\bigvee_{i\neq k \neq j} \phi_{i,k,c_1}\wedge\phi_{k,j,c_2}\right)\right]. 
\end{align*}

We now define $\Phi_{6}^{(n)}$ to forbid triangles of the form blue-blue-green or blue-green-green.

\begin{align*}
\Phi_{6}^{(n)} := \bigwedge_{i<j<k} [ &\neg(\varphi_{i,j,1} \land \bigwedge_{\substack{c_{1},c_{2} \in \{1,2\}\\c_{1} \neq c_{2}}} (\varphi_{i,k,c_{1}} \land \varphi_{j,k,c_{2}})) \land \\
&\neg(\varphi_{i,k,1} \land \bigwedge_{\substack{c_{1},c_{2} \in \{1,2\}\\c_{1} \neq c_{2}}}(\varphi_{i,j,c_{1}} \land \varphi_{j,k,c_{2}})) \land \\
&\neg(\varphi_{j,k,1} \land \bigwedge_{\substack{c_{1},c_{2} \in \{1,2\}\\c_{1} \neq c_{2}}} (\varphi_{i,j,c_{1}} \land \varphi_{i,j,c_{2}})) ]
\end{align*}

\begin{theorem}
Let $13 \leq n \leq 23$. Then $n \not \in \Spec(34_{65}).$
\end{theorem}

\begin{proof}
For each $13 \leq n \leq 23$, define:
\[
\Phi^{(n)} := \bigwedge_{i=0}^{6} \Phi_{i}^{(n)}.
\]

\noindent We have verified the unsatisfiability of each such $\Phi^{(n)}$ with a SAT solver.
\end{proof}

\begin{remark}
Our SAT solver code can be found \href{https://github.com/michaellevet/Comer-Schemes/blob/main/34_65.ipynb}{here}.
\end{remark}

\section{Lower Bounds on Cyclic Group Representations of $33_{65}$}

In this section, we will establish the following:

\begin{theorem} \label{thm:3365CyclicGroup}
There is no group representation of $33_{65}$ over $(\mathbb{Z}/n\mathbb{Z}, +)$ for any $n \leq 120$.
\end{theorem}

To prove Theorem~\ref{thm:3365CyclicGroup}, we employ a SAT solver. Our code can be found \href{https://github.com/algorithmachine/FAC}{here}. We discuss how to build the Boolean formula here $\Phi^{(n)}$ that we ultimately feed to the SAT solver.

For each $i \in \mathbb{Z}/n\mathbb{Z}$ we have three variables: $x_{i,0}, x_{i,1}, x_{i,2}$. Here, we have that $x_{i,0} = \texttt{true}$ if and only if $i$ receives the color red (the flexible atom). Similarly, $x_{i,1} = \texttt{true}$ if and only if $i$ receives the color blue, and $x_{i,2} = \texttt{true}$ if and only if $i$ receives the color green. We first enforce that exactly one of $x_{i,0}, x_{i,1}, x_{i,2}$ is satisfiable:
\[
\Phi_{0}^{(n)} := \bigwedge_{i\in \mathbb{Z}/n\mathbb{Z}} \biggr[(x_{i,0} \vee x_{i,1} \vee x_{i,2}) \land \neg(x_{i,0} \land x_{i,1}) \land \neg(x_{i,0} \land x_{i,2}) \land \neg(x_{i,1} \land x_{i,2}) \biggr].
\]

\noindent As $33_{65}$ is a symmetric relation algebra, we must enforce that the group representation is symmetric. Precisely, we must enforce that $i$ and $-i$ receive the same color (we are considering the additive group $\mathbb{Z}/n\mathbb{Z}$, and so $-i$ is the additive inverse of $i$). We enforce symmetry with the following constraint:
\[
\Phi_{1}^{(n)} := \bigwedge_{i \in \mathbb{Z}/n\mathbb{Z}} (x_{i,0} = x_{-i,0}) \land (x_{i,1} = x_{-i,1}) \land (x_{i,2} = x_{-i,2}).
\]

\noindent We next enforce that each edge has its needs met. Denote:
\begin{align*}
&\text{RN} := \{ 0,1,2\} \times \{0,1,2\}, \\
&\text{BN} := \{(0,0), (0,1), (1,0), (0,2), (2,0), (1,1)\}, \\
&\text{GN} := \{(0,0), (0,1), (0,2), (1,0), (2,0)\}
\end{align*}

\noindent to be the sets of needs for the red, blue, and green edges respectively. Define:
\begin{align*}
\Phi_{2}^{(n)} := \bigwedge_{i \in \mathbb{Z}/n\mathbb{Z}} \biggr[&\biggr((x_{i,0} \land \bigwedge_{(u,v) \in \text{RN}} \bigvee_{\substack{y \in \mathbb{Z}/n\mathbb{Z} \\ y \neq i}} x_{y,u} \land y_{(i-y) \text{ mod } n,v} \biggr) \\
&\biggr((x_{i,1} \land \bigwedge_{(u,v) \in \text{BN}} \bigvee_{\substack{y \in \mathbb{Z}/n\mathbb{Z} \\ y \neq i}} x_{y,u} \land y_{(i-y) \text{ mod } n,v} \biggr) \\
&\biggr((x_{i,2} \land \bigwedge_{(u,v) \in \text{RN}} \bigvee_{\substack{y \in \mathbb{Z}/n\mathbb{Z} \\ y \neq i}} x_{y,u} \land y_{(i-y) \text{ mod } n,v} \biggr)
\biggr]
\end{align*}

\noindent Here, $\Phi_{2}^{(n)}$ enforces that each edge has all of its needs met.

Finally, we will define a Boolean formula to enforce that there are no forbidden cycles. Recall for $33_{65}$ that the forbidden cycles are of the form $ccc$ (green, green,green), $bcc$ (blue, green, green), and $cbb$ (green, blue, blue). To accomplish this, we explicitly enumerate the permissible cycles: any cycle containing the flexible atom $a$ (red), and the all-blue triangle $bbb$. Define: 
\[
\Phi_{3}^{(n)} := \bigwedge_{\substack{i,j \in \mathbb{Z}/n\mathbb{Z} \\ i+j \neq 0 \text{ mod } n}} \neg(x_{i,0} \vee x_{j,0} \vee x_{i+j,0}) \implies (x_{i,1} \land x_{j,1} \land x_{i+j, 1}).
\]

\noindent Here, we are enforcing that if none of $i, j, i+j \pmod{n}$ are red (flexible), then $(i, j, i+j)$ form an all-blue triangle. Now define:
\[
\Phi^{(n)} = \Phi_{0}^{(n)} \land \Phi_{1}^{(n)} \land \Phi_{2}^{(n)} \land \Phi_{3}^{(n)}.
\]

\section{Conclusion}

We extended the notion of a Ramsey scheme by relaxing the condition that the cosets need to be sum-free. Using this combinatorial construction, we investigated the integral symmetric relation algebras $\mathcal{A}_{n}([i, i+j, i+\ell])$ that forbid the cycle configurations $\{ a_{i}a_{i+j}a_{i+\ell} : 0 \leq i < n\}$. As an application, we showed that $34_{65}$ admits a finite representation over the Comer scheme $C(24, 3697)$.

We also established several lower bounds. We showed that the minimum element in \linebreak $\Spec(\mathcal{A}_{3}([i,i,i]))$ is at least $13$ (folklore -- see Remark~\ref{rmk:LowerBounds}), and the minimum element in \linebreak$\Spec(\mathcal{A}_{4}([i,i,i]))$ is at least $21$. Furthermore, we showed that the minimum element in both $\Spec(33_{65})$ and $\Spec(34_{65})$ is at least $24$.

We conclude with several open problems.

\begin{conjecture}[Strong Commutative Flexible Atom Conjecture]
Every finite \textit{commutative} integral RA with a flexible atom is representable over a Comer scheme.
\end{conjecture}

\begin{remark}
The assumption that the relation algebras are commutative is indeed  necessary, and we discovered this only after the conference version \cite{AALRamics23} where we omitted the \textit{commutative} hypothesis. The algebra $1284_{1316}$ has diversity atoms $a$, $b$, $r$, and $\breve{r}$, where $a$ is flexible and the only  forbidden diversity cycle is $rbr$. However, $1284_{1316}$ is not commutative, and therefore not representable over an Abelian group. In particular, Comer's method does not yield a group representation.

It would even be of interest to look at the special case of symmetric relation algebras. Note that if all our diversity atoms are symmetric, then the relation algebra is commutative. However, the first and third authors had considerable success leveraging Comer's method to provide finite representations for non-symmetric, but commutative relation algebras \cite{AlmLevet2}.
\end{remark}

This paper contains some evidence for this conjecture, as does \cite{AlmLevet2}, which contains many new finite representations over Comer schemes. The infinite families of Directed Anti-Ramsey algebras from \cite{AlmLevet2}, and the infinite family $\{ \mathcal{A}_n([i,i,i+1]) : n > 4\}$, both appear to be ``mostly'' representable, based on the evidence. Except for some exceptions for small $n$, Comer schemes seem to yield representations.  The quasirandom nature of the sum-product interaction in finite fields (see \cite{Green} for example) seems to suggest a heuristic similar to one used in number theory: \emph{any potential structure in the primes not ruled out by obvious considerations can probably be found}. Case in point: we looked for an $n$ and a $p$ such that $\mathcal{A}_n([i,i,i+n/2])$ was representable over $\mathbb{F}_p$. And we found one.  There are likely infinitely many such $n$ and $p$, and $34_{65}$ will embed in all of them. 

There are (at least) two ingredients needed to complete the proof: (i) prove that Comer schemes are actually quasirandom in a relevant sense; and (ii) prove that for every RA with a flexible atom, there is an algebra (for instance,  $\mathcal{A}_n([i,i+j,i+\ell])$, but presumably more general) into which it embeds that admits a representation over a Comer scheme.

\begin{problem}
Formulate a suitable notion of quasirandomness for sequences of relation algebra atom structures or Comer schemes.
\end{problem}

The relation algebra $33_{65}$ is not known to admit a finite representation. In fact, it is the last remaining algebra in the family $N_{65}$ that has a flexible atom and for which no finite representation is known.

\begin{problem}
Find a forbidden scheme that would admit a representation of $33_{65}$. 
\end{problem}

\begin{problem}
The third author conjectures that there may exist a group representation of $33_{65}$ over a symmetric group. As the minimum element of $\Spec(33_{65})$ is at least $24$, we ask whether $33_{65}$ admits a group representation over $S_{4}$. Similarly, we ask whether $33_{65}$ admits a group representation over $S_{5}$.
\end{problem}

\begin{problem}
Algebra $\mathcal{A}_3([i,i,i+1])$ is $42_{65}$ and is not representable. (This fact is folklore and is probably not published. The reason for non-representability is failure to satisfy the equations Maddux calls (J), (L), and (M). See \cite[Page~30]{Madd} or \cite[Prop.~1]{KraMadd}.) Algebra  $\mathcal{A}_4([i,i,i+1])$ is  not representable via Comer's method (see Table \ref{tab:my_label}). Is it representable by some other method?
\end{problem}

\begin{problem}
Algebra $\mathcal{A}_6([i,i,i+2])$ is  not representable via Comer's method (see Table \ref{tab:my_label}).  Is it representable by some other method?
\end{problem}

\begin{problem}
Algebra $\mathcal{A}_5([i,i,i+1])$ is representable over $\mathbb{F}_{61}$. Is it representable over an infinite set? Over $\mathbb{Z}$?
\end{problem}

\appendix
\section{Point Algebra is not Finitely Representable} \label{appendix:PointAlgebra}

The \textit{point algebra} is the relation algebra with atoms $1', r, \breve{r}$ and the forbidden cycle $rr\breve{r}$. It is well-known amongst relation algebra specialists that the point algebra admits a representation over an infinite set, but is not finitely representable. We were initially unable to find a reference for this, and so we included a proof here for completeness. We thank an anonymous referee for pointing out that that this also follows from existing works \cite{semrl2021finite, NeuzerlingThesis, Neuzerling2015UndecidabilityOR}.

\begin{proposition}
The point algebra is representable over an infinite set, but not a finite set.
\end{proposition}

\begin{proof}
To see that the point algebra is representable,  Let $U = \mathbb{Q}$, and interpret to atom $r$ as the less-than relation on $\mathbb{Q}$, i.e., $\{(x,y) : x,y \in \mathbb{Q}, x<y \}$, and $\breve{r}$ as the greater-than relation.  

To see that no representation over a finite set is possible, we observe first that the interpretation of $r$ has to be a strict total ordering on $U$. Let $(u,v) \in r$. (We say $u$ is less than $v$.)  Because $r$ is contained in $\breve{r}\circ r $, there is some point $w$ less than both $u$ and $v$. Similarly, because $r$ is contained in $r\circ \breve{r}$, there is some point $x$ greater than both $u$ and $v$. Hence the ordering has no endpoints, so cannot be finite. 
\end{proof}

\bibliographystyle{alphaurl}
\bibliography{references}

\newcommand{\etalchar}[1]{$^{#1}$}
\begin{thebibliography}{ALM{\etalchar{+}}22}

\bibitem[AA19]{AlmAndrews}
Jeremy~F. Alm and David~A. Andrews.
\newblock A reduced upper bound for an edge-coloring problem from relation
  algebra.
\newblock {\em Algebra Universalis}, 80(2):Art. 19, 11, 2019.
\newblock \href {https://doi.org/10.1007/s00012-019-0592-6}
  {\path{doi:10.1007/s00012-019-0592-6}}.

\bibitem[AAL23]{AALRamics23}
Jeremy~F. Alm, David Andrews, and Michael Levet.
\newblock Comer schemes, relation algebras, and the flexible atom conjecture.
\newblock In Roland Gl{\"{u}}ck, Luigi Santocanale, and Michael Winter,
  editors, {\em Relational and Algebraic Methods in Computer Science - 20th
  International Conference, RAMiCS 2023, Augsburg, Germany, April 3-6, 2023,
  Proceedings}, volume 13896 of {\em Lecture Notes in Computer Science}, pages
  17--33. Springer, 2023.
\newblock \href {https://doi.org/10.1007/978-3-031-28083-2\_2}
  {\path{doi:10.1007/978-3-031-28083-2\_2}}.

\bibitem[AB14]{AlonBourgain}
Noga Alon and Jean Bourgain.
\newblock Additive patterns in multiplicative subgroups.
\newblock {\em Geometric and Functional Analysis - GAFA}, 24, 06 2014.
\newblock \href {https://doi.org/10.1007/s00039-014-0270-y}
  {\path{doi:10.1007/s00039-014-0270-y}}.

\bibitem[AL23]{AlmLevet2}
Jeremy~F. Alm and Michael Levet.
\newblock Directed {R}amsey and anti-{R}amsey schemes and the flexible atom
  conjecture.
\newblock {\em International Journal of Algebra and Computation}, 2023.
\newblock \href {https://doi.org/10.1142/S0218196723500595}
  {\path{doi:10.1142/S0218196723500595}}.

\bibitem[Alm17]{Alm2017401AB}
Jeremy~F. Alm.
\newblock 401 and beyond: improved bounds and algorithms for the {Ramsey}
  algebra search.
\newblock {\em J. Integer Seq.}, 20:17.8.4, 2017.

\bibitem[ALM{\etalchar{+}}22]{alm2021improved}
Jeremy Alm, Michael Levet, Saeed Moazami, Jorge Montero-Vallejo, Linda Pham,
  Dave Sexton, and Xiaonan Xu.
\newblock Improved bounds on the size of the smallest representation of
  relation algebra $32_{65}$.
\newblock {\em Algebra universalis}, 83, 08 2022.
\newblock \href {https://doi.org/10.1007/s00012-022-00791-4}
  {\path{doi:10.1007/s00012-022-00791-4}}.

\bibitem[AM15]{AlmManskeRamsey}
Jeremy~F. Alm and Jacob Manske.
\newblock {Sum-free cyclic multi-bases and constructions of Ramsey algebras}.
\newblock {\em Discrete Applied Mathematics}, 180:204--212, 2015.
\newblock \href {https://doi.org/10.1016/j.dam.2014.08.002}
  {\path{doi:10.1016/j.dam.2014.08.002}}.

\bibitem[AM18]{AlmMaddux5965}
Jeremy Alm and Roger Maddux.
\newblock Finite representations for two small relation algebras.
\newblock {\em Algebra universalis}, 79, 11 2018.
\newblock \href {https://doi.org/10.1007/s00012-018-0570-4}
  {\path{doi:10.1007/s00012-018-0570-4}}.

\bibitem[AMM08]{AMM}
J.~Alm, R.~Maddux, and J.~Manske.
\newblock Chromatic graphs, {R}amsey numbers and the flexible atom conjecture.
\newblock {\em Electron. J. Combin.}, 15(1):Research paper 49, 8, 2008.
\newblock URL:
  \url{http://www.combinatorics.org/Volume_15/Abstracts/v15i1r49.html}.

\bibitem[ANG20]{Nonrepresentable2020}
Hajnal Andrèka, Istv\'an N\'emeti, and Steven Givant.
\newblock Nonrepresentable relation algebras from groups.
\newblock {\em The Review of Symbolic Logic}, 13(4):861–881, 2020.
\newblock \href {https://doi.org/10.1017/S1755020319000224}
  {\path{doi:10.1017/S1755020319000224}}.

\bibitem[AY19]{AlmYlv}
Jeremy~F. Alm and Andrew Ylvisaker.
\newblock A fast coset-translation algorithm for computing the cycle structure
  of {C}omer relation algebras over {Z/pZ}.
\newblock {\em Theoretical Computer Science}, 791:127--131, 2019.
\newblock \href {https://doi.org/10.1016/j.tcs.2019.05.019}
  {\path{doi:10.1016/j.tcs.2019.05.019}}.

\bibitem[Com83]{ComerMonochrome}
Stephen~D. Comer.
\newblock Color schemes forbidding monochrome triangles.
\newblock {\em Proceedings of the fourteenth Southeastern conference on
  combinatorics, graph theory and computing (Boca Raton, Fla., 1983)},
  39:231--2366, 1983.

\bibitem[Com84]{ComerCombinatorial}
Stephen Comer.
\newblock Combinatorial aspects of relations.
\newblock {\em Algebra Universalis}, 18:77--94, 02 1984.
\newblock \href {https://doi.org/10.1007/BF01182249}
  {\path{doi:10.1007/BF01182249}}.

\bibitem[DH13]{DH}
L.~Dodd and R.~Hirsch.
\newblock {Improved lower bounds on the size of the smallest solution to a
  graph colouring problem, with an application to relation algebra}.
\newblock {\em Journal on Relational Methods in Computer Science}, 2:18--26,
  2013.

\bibitem[Gre09]{Green}
Ben Green.
\newblock Sum-product phenomena in $\mathbb{F}_p$: a brief introduction.
\newblock \arXiv{0904.2075}{[math.NT]}, 2009.

\bibitem[JMT95]{Jipsen}
P.~Jipsen, R.~D. Maddux, and Z.~Tuza.
\newblock Small representations of the relation algebra {$ \mathcal{E}\sb
  {n+1}(1,2,3)$}.
\newblock {\em Algebra universalis}, 33(1):136--139, 1995.

\bibitem[KM22]{KraMadd}
Richard~L. Kramer and Roger~D. Maddux.
\newblock Monk algebras and {R}amsey theory.
\newblock {\em J. Log. Algebr. Methods Program.}, 126:Paper No. 100759, 26,
  2022.
\newblock \href {https://doi.org/10.1016/j.jlamp.2022.100759}
  {\path{doi:10.1016/j.jlamp.2022.100759}}.

\bibitem[Kow15]{KowalskiRamsey}
Tomasz Kowalski.
\newblock {Representability of Ramsey Relation Algebras}.
\newblock {\em Algebra universalis}, 74, 11 2015.
\newblock \href {https://doi.org/10.1007/s00012-015-0353-0}
  {\path{doi:10.1007/s00012-015-0353-0}}.

\bibitem[Mad]{MadduxPersonalCommunication}
Roger Maddux.
\newblock Personal communication.

\bibitem[Mad82]{MadduxVarieties}
Roger Maddux.
\newblock Some varieties containing relation algebras.
\newblock {\em Transactions of The American Mathematical Society - Trans. Amer.
  Math. Soc.}, 272:501--501, 08 1982.
\newblock \href {https://doi.org/10.2307/1998710} {\path{doi:10.2307/1998710}}.

\bibitem[Mad06]{Madd}
R.~Maddux.
\newblock {\em Relation algebras}, volume 150 of {\em Studies in Logic and the
  Foundations of Mathematics}.
\newblock Elsevier B. V., Amsterdam, 2006.

\bibitem[Mad09]{MadduxSlides}
Roger Maddux.
\newblock Edge-coloring problems, 01 2009.

\bibitem[Mad11]{MadduxAMS2011}
Roger Maddux.
\newblock {Do all the Ramsey algebras exist?}, 03 2011.
\newblock Talk presented at the AMS sectional meeting in Iowa City, March 18
  2011.

\bibitem[Mad13]{MadduxPersonalCommunication2}
Roger Maddux.
\newblock Personal communication, 2013.

\bibitem[Neu15]{Neuzerling2015UndecidabilityOR}
Murray Neuzerling.
\newblock Undecidability of representability for lattice-ordered semigroups and
  ordered complemented semigroups.
\newblock {\em Algebra universalis}, 76:431--443, 2015.
\newblock \href {https://doi.org/10.1007/s00012-016-0409-9}
  {\path{doi:10.1007/s00012-016-0409-9}}.

\bibitem[Neu17]{NeuzerlingThesis}
Murray~David Neuzerling.
\newblock {\em {Representability of finite algebras of relations}}.
\newblock PhD thesis, La Trobe University, 1 2017.
\newblock URL:
  \url{https://opal.latrobe.edu.au/articles/thesis/Representability_of_finite_algebras_of_relations/21854493},
  \href {https://doi.org/10.26181/21854493.v1}
  {\path{doi:10.26181/21854493.v1}}.

\bibitem[Sem21]{semrl2021finite}
Jas Semrl.
\newblock Finite representation property for relation algebra reducts, 2021.
\newblock \href {https://arxiv.org/abs/2111.01213} {\path{arXiv:2111.01213}}.

\bibitem[Whi75]{WHITEHEAD1975399}
Earl~Glen Whitehead.
\newblock Difference sets and sum-free sets in groups of order 16.
\newblock {\em Discrete Mathematics}, 13(4):399--407, 1975.
\newblock \href {https://doi.org/10.1016/0012-365X(75)90060-6}
  {\path{doi:10.1016/0012-365X(75)90060-6}}.

\end{thebibliography}

\newpage

\begin{figure} 
\begin{center}
\begin {tikzpicture}[semithick, node distance =1 cm and 4cm]

	\node (C) {$x_{2}$};
	\node (A) [below left = of C] {$x_{0}$};
	\node (B) [below right = of C] {$x_{1}$};
	\node (D) [above = of C] {$x_{3}$};
	\node (E) [above = of D] {$x_{4}$};
	\node (F) [above = of E] {$x_{5}$};
	\node (G) [above = of F] {$x_{6}$};
	\node (H) [above = of G] {$x_{7}$};
	\node (I) [above = of H] {$x_{8}$};
	\node (J) [above = of I] {$x_{9}$};
	\node (K) [above = of J] {$x_{10}$};

	\path (A) edge[processred] node[above] {} (B);
	\path (A) edge[ForestGreen] node[above] {} (C);
	\path (B) edge[ForestGreen] node[left] {} (C);
	\path (A) edge[ForestGreen] node[above] {} (D);
	\path (B) edge[processred] node[left] {} (D);
	\path (A) edge[ForestGreen] node[above] {} (E);
	\path (B) edge[processblue] node[left] {} (E);
 
	\path (A) edge[processblue] node[above] {} (F);
	\path (B) edge[ForestGreen] node[left] {} (F);
	\path (A) edge[processblue] node[above] {} (G);
	\path (B) edge[processred] node[left] {} (G);
	\path (A) edge[processblue] node[above] {} (H);
	\path (B) edge[processblue] node[left] {} (H);
	\path (A) edge[processred] node[above] {} (I);
	\path (B) edge[ForestGreen] node[left] {} (I);
	\path (A) edge[processred] node[above] {} (J);
	\path (B) edge[processred] node[left] {} (J);
	\path (A) edge[processred] node[above] {} (K);
	\path (B) edge[processblue] node[left] {} (K);
	
	\path (C) edge[processred] node{} (D);
	\path (C) edge[processred, bend left =25] node{} (E);
	\path (C) edge[processred, bend right =25] node{} (F);
	\path (C) edge[processred, bend left =35] node{} (G);
	\path (C) edge[processred, bend right =35] node{} (H);
	\path (C) edge[processred, bend left =40] node{} (I);
	\path (C) edge[processred, bend right =40] node{} (K);
	
	\path (D) edge[processred] node{} (E);
	\path (D) edge[processred, bend left =25] node{} (F);
	\path (D) edge[processred, bend right =25] node{} (G);
	\path (D) edge[processred, bend left =25] node{} (H);
	
	\path (E) edge[processred] node{} (F);
	\path (E) edge[processred, bend left = 25] node{} (G);
	\path (E) edge[processred, bend right = 25] node{} (H);
	\path (E) edge[processred, bend left =25] node{} (I);
	
	\path (F) edge[processred, bend left = 25] node{} (H);
	\path (F) edge[processred, bend right = 25] node{} (I);
	\path (F) edge[processred, bend left =25] node{} (K);
	
	\path (H) edge[processred] node{} (I);
	\path (I) edge[processred, bend left =25] node{} (K);
	\end{tikzpicture}  
\end{center}
\caption{Subgraph which must appear off any red edge in a representation of $33_{65}$.}
\label{fig:3365}
\end{figure}

\begin{figure} 
\begin{center}
\begin {tikzpicture}[semithick, node distance =1 cm and 4cm] 

	\node (C) {$x_{2}$};
	\node (A) [below left = of C] {$x_{0}$};
	\node (B) [below right = of C] {$x_{1}$};
	\node (D) [above = of C] {$x_{3}$};
	\node (E) [above = of D] {$x_{4}$};
	\node (F) [above = of E] {$x_{5}$};
	\node (G) [above = of F] {$x_{6}$};
	\node (H) [above = of G] {$x_{7}$};
	\node (I) [above = of H] {$x_{8}$};
	\node (J) [above = of I] {$x_{9}$};
	\node (K) [above = of J] {$x_{10}$};

	\path (A) edge[processred] node[above] {} (B);
	\path (A) edge[processblue] node[above] {} (C);
	\path (B) edge[processblue] node[left] {} (C);
	\path (A) edge[processblue] node[above] {} (D);
	\path (B) edge[processred] node[left] {} (D);
	\path (A) edge[processblue] node[above] {} (E);
	\path (B) edge[ForestGreen] node[left] {} (E);
	\path (A) edge[processred] node[above] {} (F);
	\path (B) edge[processblue] node[left] {} (F);
	\path (A) edge[ForestGreen] node[above] {} (G);
	\path (B) edge[processblue] node[left] {} (G);
	\path (A) edge[ForestGreen] node[above] {} (H);
	\path (B) edge[ForestGreen] node[left] {} (H);
	\path (A) edge[ForestGreen] node[above] {} (I);
	\path (B) edge[processred] node[left] {} (I);
	\path (A) edge[processred] node[above] {} (J);
	\path (B) edge[ForestGreen] node[left] {} (J);
	\path (A) edge[processred] node[above] {} (K);
	\path (B) edge[processred] node[left] {} (K);
	
	\path (C) edge[processred, bend left = 25] node{} (E);
	\path (C) edge[processred, bend right = 25] node{} (G);
	\path (C) edge[processred, bend left = 30] node{} (H);
	\path (C) edge[processred, bend right = 30] node{} (I);
	\path (C) edge[processred, bend left = 35] node{} (J);
	
	\path (D) edge[processred, bend left = 25] node{} (G);
	\path (D) edge[processred, bend right = 25] node{} (H);
	\path (D) edge[processred, bend left = 35] node{} (I);
	
	\path(E) edge[processred] node{} (F);
	\path (E) edge[processred, bend left = 25] node{} (G);
	\path (E) edge[processred, bend right = 25] node{} (H);
	\path (E) edge[processred, bend left = 35] node{} (I);
	
	\path (F) edge[processred, bend right = 25] node{} (H);
	\path (F) edge[processred, bend left = 35] node{} (J);
	
	\path (G) edge[processred, bend right = 25] node{} (H);
	\path (G) edge[processred, bend left = 35] node{} (J);
	\end{tikzpicture}  
\end{center}
\caption{Subgraph which must appear off any red edge in a representation of $34_{65}$.}
\label{fig:3465}
\end{figure}

\end{document}